\newtheorem{theorem}{Theorem}[section]
\newtheorem{corollary}[theorem]{Corollary}
\newtheorem{conjecture}[theorem]{Conjecture}
\newtheorem{lemma}[theorem]{Lemma}
\newtheorem{proposition}[theorem]{Proposition}
\newcommand{\bbR}{\mathbb{R}}
\newcommand{\gauss}[2]{\genfrac{[}{]}{0pt}{}{#1}{#2}}
\newcommand{\Br}[1]{(#1)}
\title{The least Euclidean distortion constant of a distance-regular graph}
\author{

Sebastian M. Cioab\u{a}\footnotemark[1],

Himanshu Gupta\footnotemark[1],

Ferdinand Ihringer\footnotemark[2],\\

Hirotake Kurihara\footnotemark[3]}
\date{\today}
\begin{document}
\maketitle
\footnotetext[1]{Department of Mathematical Sciences, University of Delaware, Newark, DE 19716-2553, USA, {\tt \{cioaba,himanshu\}@udel.edu}. This research is partially supported by the NSF grant CIF-1815922.}
\footnotetext[2]{Dept.~of Mathematics:~Analysis, Logic and Discrete
Math., Ghent University, Belgium, {\tt ferdinand.ihringer@ugent.be}. The author is supported by a senior postdoctoral fellowship of the Research Foundation -- Flanders (FWO).}
\footnotetext[3]{Department of Applied Science, Yamaguchi University, Yoshida,  Yamaguchi, 753-8511, Japan, {\tt kurihara-hiro@yamaguchi-u.ac.jp} This research is partially supported by JSPS KAKENHI Grant Number JP20K03623.}

\begin{abstract}
In 2008, Vallentin made a conjecture involving the least distortion of an embedding of a distance-regular graph into Euclidean space.  Vallentin's conjecture implies that for a least distortion Euclidean embedding of a distance-regular graph of diameter $d$, the most contracted pairs of vertices are those at distance $d$. In this paper, we confirm Vallentin's conjecture for several families of distance-regular graphs. We also provide counterexamples to this conjecture, where the largest contraction occurs between pairs of vertices at distance $d-1$. We suggest three alternative conjectures and prove them for various families of distance-regular graphs and for distance-regular graphs of diameter 3.
\end{abstract}

\section{Introduction}\label{sec:intro}

Embeddings of graphs into Euclidean spaces have been well studied in mathematics and computer science. Linial, London, and Rabinovich \cite{LLR} investigated the distortion of such embeddings. Informally, the \textit{distortion} of an embedding  of a graph $G$ measures how much the combinatorial distance between two vertices in $G$ disagrees with their Euclidean distance in the embedding. This ties into a larger problem of embedding finite metric spaces in Hilbert spaces, as investigated by Bourgain \cite{Bourgain1985}.

In this paper, we continue an investigation by Vallentin \cite{FV} into the least Euclidean distortion constant of distance-regular graphs. We refer to \cite{LLR,FV} for more details on applications and related work. More formally, let $\| \cdot \|$ denote the norm of the standard inner product on $\bbR^n$,
that is $\| x \| = \sqrt{x_1^2 + \ldots + x_n^2}$. Let $(X,d)$ be a finite metric space
with $n$ elements. 
For an embedding $\rho: X \to \bbR^n$ define, as in \cite{LLR}, the following quantities:
\begin{itemize}
    \item expansion($\rho$) $:= \sup_{x,y \in X} \frac{\|\rho(x)-\rho(y)\|}{d(x,y)}$,
    \item contraction($\rho$) $:= \sup_{x,y \in X} \frac{d(x,y)}{\|\rho(x)-\rho(y)\|}$,
    \item distortion($\rho$) $:= \text{expansion}(\rho)\cdot \text{contraction}(\rho)$.
\end{itemize}
Let $c_2(X,d)$ denote the least distortion for which $(X,d)$ can be
embedded into $\bbR^n$. We say that an embedding of $(X,d)$ 
is \emph{optimal} if it has distortion $c_2(X,d)$. 
Any connected graph $G = (V,E)$ can be regarded as a finite metric space with point set $V$, where the distance between any two vertices equals the length of the shortest path between them. We denote the  least distortion of this finite metric space by $c_2(G)$.  In this paper, all embeddings are {\it faithful}, that is $\|\rho(x)-\rho(y)\|$ only depends on $d(x,y)$, not the choice of $x$ or $y$. Vallentin showed that this is no restriction in case of distance-regular graphs (see \cite[Lemma 3.2]{FV}).

Extending the work by Enflo for the hypercube \cite{Enflo1969} and
by Linial and Magen for cycles and products of cycles \cite{LM},
Vallentin \cite{FV} determined the least distortion constant
of Hamming graphs (including the hypercube), Johnson graphs, and all
strongly regular graphs. These are all classes of distance-regular graphs. A connected graph is {\em distance-regular} if it is regular of valency $k$, and if for any two vertices $x,y$ at distance $i$, there are precisely $c_i$ neighbors of $y$ at distance $i-1$ from $x$ and precisely $b_i$ neighbors at distance $i+1$ from $x$ \cite[p. 126]{BCN}. The set $\{ b_0, b_1, \ldots, b_{d-1}; c_1, c_2, \ldots, c_d \}$ is called {\it the intersection array} of the distance-regular graph. The adjacency matrix of a distance-regular graph $G$ has precisely $d+1$ distinct eigenvalues 
$\theta_0 > \theta_1 > \ldots > \theta_d$. Further, there exist univariate polynomials $v_i$ of degree $i$ such that $v_i(\theta_j)$ is an eigenvalue of the distance-$i$-graph of $G$ (and any eigenvalue of the distance-$i$-graph can be obtained that way). Denote $w_i(\theta_j) = \tfrac{v_i(\theta_j)}{v_i(\theta_0)}$. The sequence $(w_0(\theta_j),\ldots,w_d(\theta_j))$ is called {\em the standard sequence} of $G$ corresponding to the eigenvalue $\theta_j$ in \cite[p. 128]{BCN} and has the name {\em the $r$-cosine sequence} with respect to $\theta_j$ in \cite[p. 263]{G} (see Section \ref{sec:opt_embdedding}, \cite[Chapter 4]{BCN} and \cite[Chapter 13]{G} for a more details). In this more general framework, Vallentin proved the following result (see \cite[Theorem 2.4]{FV}).

\begin{theorem}[{Vallentin, 2008}] \label{thm:vallentin}
If $G$ is a distance-regular graph with diameter $d$, then
$$c_2(G)^2 \geq d^2 \min_{j \in \{1,\ldots,d\}} \left\{\frac{1-w_1(\theta_j)}{1-w_d(\theta_j)} \right\}.$$
\end{theorem}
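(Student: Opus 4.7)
My plan is to parameterize all faithful embeddings using the spectral structure of the Bose--Mesner algebra of $G$, reduce the distortion lower bound to a ratio of linear functionals in these parameters, and then apply the elementary observation that a ratio of non-negative linear combinations is bounded below by the minimum of the coordinate-wise ratios.

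Let $\rho : V \to \bbR^n$ be a faithful embedding, and form the Gram matrix $B$ with $B_{xy} = \langle \rho(x),\rho(y)\rangle$. Since $\|\rho(x)-\rho(y)\|$ depends only on $d(x,y)$, after an appropriate translation $B$ lies in the Bose--Mesner algebra of $G$ and is positive semidefinite, so it decomposes as $B = \sum_{j=0}^d \alpha_j E_j$ with $\alpha_j \geq 0$, where $E_0,\ldots,E_d$ are the primitive idempotents corresponding to $\theta_0,\ldots,\theta_d$. The defining property of the standard sequences gives $E_j(x,y) = E_j(x,x)\, w_i(\theta_j)$ whenever $d(x,y)=i$. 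Writing $\beta_j = \alpha_j E_j(x,x) \geq 0$ (independent of $x$ by vertex-transitivity of the intersection numbers), and letting $f(i) := \|\rho(x)-\rho(y)\|$ for $d(x,y)=i$, I obtain
\[
f(i)^2 \;=\; 2\sum_{j=0}^d \beta_j\bigl(1 - w_i(\theta_j)\bigr) \;=\; 2\sum_{j=1}^d \beta_j\bigl(1 - w_i(\theta_j)\bigr),
\]
since $w_i(\theta_0)=1$ for all $i$. I should also argue that the constant-vector contribution $\beta_0$ is irrelevant, so I may restrict to $\beta_1,\ldots,\beta_d \geq 0$, not all zero (non-degeneracy of $\rho$).

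From the definitions, $\mathrm{expansion}(\rho) \geq f(1)$ and $\mathrm{contraction}(\rho) \geq d/f(d)$, so
\[
\mathrm{distortion}(\rho)^2 \;\geq\; \frac{d^2 f(1)^2}{f(d)^2} \;=\; d^2\cdot\frac{\sum_{j=1}^d \beta_j\bigl(1-w_1(\theta_j)\bigr)}{\sum_{j=1}^d \beta_j\bigl(1-w_d(\theta_j)\bigr)}.
\]
Next I will use the elementary fact that for non-negative reals $a_j,b_j$ with $b_j>0$, the ratio $(\sum \beta_j a_j)/(\sum \beta_j b_j)$ is a convex combination of the ratios $a_j/b_j$ with weights $\beta_j b_j/\sum_\ell \beta_\ell b_\ell$, and is therefore $\geq \min_j a_j/b_j$. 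Applying this with $a_j = 1-w_1(\theta_j)$ and $b_j = 1-w_d(\theta_j)$ yields the desired inequality, after checking that the denominators are strictly positive for $j\geq 1$ (which follows because a non-trivial embedding must separate some pair of vertices at distance $d$, forcing at least one $\beta_j$ with $1-w_d(\theta_j)>0$).

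The main technical step is the first one: justifying that every faithful embedding is captured by a non-negative combination $\sum \beta_j E_j$ and that the distances of $\rho$ are exactly $f(i)^2 = 2\sum_j \beta_j(1-w_i(\theta_j))$. This is where the standard-sequence formalism of \cite[Chapter 4]{BCN} is essential, and where I would verify carefully that translating $\rho$ so that $\sum_x \rho(x)=0$ eliminates the $E_0$-component without changing distortion. Once this identification is in hand, the rest of the argument is the clean optimization above.
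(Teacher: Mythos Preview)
Your argument is correct, but it differs from the route taken in \cite{FV} (and recapitulated in Section~\ref{Counterexamples} of the paper). Vallentin works on the \emph{dual} side: he uses the Linial--London--Rabinovich characterization
\[
c_2(G)^2 = \max_Q \frac{\sum_{q_{xy}>0} d(x,y)^2 q_{xy}}{\sum_{q_{xy}<0} d(x,y)^2 (-q_{xy})}
\]
over PSD matrices $Q$ with zero row sums, and plugs in the explicit test matrix $Q_\alpha = (k_1 - \alpha k_d)A_0 - A_1 + \alpha A_d$, choosing $\alpha$ so that $Q_\alpha \succeq 0$. You instead work on the \emph{primal} side, parameterizing the Gram matrix of an arbitrary faithful embedding as $\sum_j \beta_j E_j$ and bounding the distortion directly via the ratio-of-linear-combinations inequality. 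The two are SDP duals of one another, and the minimum over $j$ in the statement arises in both: in Vallentin's proof as the largest $\alpha$ keeping $Q_\alpha$ PSD, in yours as the smallest coordinate-wise ratio. Your approach is arguably more elementary, since it avoids invoking the LLR duality and the complementary-slackness conditions of \cite{LM}; Vallentin's has the advantage that tightness is transparent---one sees immediately that equality forces the most-contracted pairs to lie at distance $d$, which is precisely the observation driving the counterexamples in Section~\ref{Counterexamples}.

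One small point to clean up: you need $1-w_d(\theta_j)>0$ for the ratio $a_j/b_j$ to make sense, but for antipodal graphs $w_d(\theta_j)=1$ when $j$ is even. This is harmless---those terms contribute $+\infty$ to the minimum and only help the numerator in your weighted ratio---but you should say so explicitly rather than leaving it to the parenthetical at the end.
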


In \cite{FV}, Vallentin also conjectured that the preceding result is tight.

\begin{conjecture}\label{FVconj}(Vallentin's Conjecture)
If $G$ is a distance-regular graph with diameter $d$, then 
$$c_2(G)^2 = d^2 \min_{j \in \{1,\ldots,d\}} \left\{\frac{1-w_1(\theta_j)}{1-w_d(\theta_j)} \right\}.$$
\end{conjecture}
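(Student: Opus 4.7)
Since Theorem~\ref{thm:vallentin} already supplies the $\geq$ direction, the plan is to construct a faithful embedding matching the conjectured value. By \cite[Lemma 3.2]{FV}, faithful embeddings of a distance-regular graph $G$ are parameterized by nonnegative weights $\alpha_1,\dots,\alpha_d \geq 0$ on the nontrivial eigenspaces; up to a positive scalar, the squared Euclidean distance between the images of two vertices at combinatorial distance $i$ is
\[
f_\alpha(i) \;=\; \sum_{j=1}^{d} \alpha_j m_j \bigl(1 - w_i(\theta_j)\bigr),
\]
where $m_j$ denotes the multiplicity of $\theta_j$. Since distortion is scale invariant, the task reduces to minimizing $\sup_{i,k}\, k^2 f_\alpha(i)/\bigl(i^2 f_\alpha(k)\bigr)$ over $\alpha \geq 0$, $\alpha \neq 0$.

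The natural candidate is the single-eigenspace embedding $\alpha = e_{j^\ast}$, where $j^\ast$ attains $\min_{j} (1-w_1(\theta_j))/(1-w_d(\theta_j))$. Then $f_\alpha(i) = m_{j^\ast}\bigl(1-w_i(\theta_{j^\ast})\bigr)$, and the distortion squared equals the conjectured value \emph{provided} the sup over $(i,k)$ is achieved at $(1,d)$. The first step is therefore to verify the two extremality conditions on the minimizing cosine sequence: (a) $(1-w_i(\theta_{j^\ast}))/i^2 \le 1-w_1(\theta_{j^\ast})$ for every $i$, so that the maximum expansion occurs on adjacent pairs; and (b) $k^2/(1-w_k(\theta_{j^\ast})) \le d^2/(1-w_d(\theta_{j^\ast}))$ for every $k$, so that the maximum contraction occurs on diametral pairs. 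In favorable families, both conditions should follow from the three-term recurrence for $w_i(\theta_j)$ derived from the intersection array, together with standard monotonicity properties of the cosine sequence for eigenvalues close to $\theta_0$; this recovers Vallentin's verifications for Hamming, Johnson, and strongly regular graphs.

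The main obstacle is condition (b), which is precisely the informal claim in the abstract that the most contracted pairs lie at distance $d$. A mixture $\alpha$ with more than one positive coordinate does not evade this obstacle: if the sup of $k^2 f_\alpha(i)/(i^2 f_\alpha(k))$ is attained at $(1,d)$, then the distortion squared equals $d^2 f_\alpha(1)/f_\alpha(d)$, and the elementary inequality for ratios of nonnegative sums gives $f_\alpha(1)/f_\alpha(d) \geq \min_{j} (1-w_1(\theta_j))/(1-w_d(\theta_j))$; if instead the sup is attained elsewhere, the distortion is strictly larger. Hence, up to choosing among eigenvalues that tie for the minimum, any proof of the conjecture essentially reduces to verifying (a) and (b) for the single-eigenspace embedding associated with $\theta_{j^\ast}$. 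The practical plan is therefore to proceed family by family, using the known intersection arrays to compute the cosine sequences and to check (a) and (b) explicitly; a failure of (b) at some intermediate $k < d$ is the natural diagnostic pinpointing a candidate counterexample to Conjecture~\ref{FVconj}.
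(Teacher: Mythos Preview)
Your proposal cannot succeed because Conjecture~\ref{FVconj} is \emph{false}: the paper does not prove it but rather disproves it in Section~\ref{Counterexamples}. Concretely, for Hadamard graphs with $\mu\ge 34$ (and for several Golay-related antipodal double covers), the paper shows that $c_2(G)^2$ strictly exceeds the right-hand side of the conjecture. In these examples the most contracted pairs for the optimal embedding sit at distance $d-1$, not $d$, and the correct value of $c_2(G)^2$ is obtained from Theorem~\ref{thm:vallentin2} with $r=d-1$.

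Your own analysis in fact locates the failure precisely. Your condition~(a) is never an obstacle: for \emph{any} faithful embedding the maximum expansion is attained on adjacent pairs, by the triangle inequality (this is \cite[Claim~2.2]{LM}, reproved in the paper as Lemma~\ref{simplelemma}), so no case-by-case verification is needed there. The entire burden is on your condition~(b), and (b) is exactly what breaks. For an antipodal $r$-cover one has $w_d(\theta_1)=-1/(r-1)$ and $w_{d-1}(\theta_1)=-\theta_1/(k(r-1))$, and the paper's computation~\eqref{eq:antipodal} shows that $(1-w_{d-1}(\theta_1))/(d-1)^2 < (1-w_d(\theta_1))/d^2$ whenever $\theta_1/k < (d^2-2rd+r)/d^2$; this is satisfied, for instance, by large Hadamard graphs. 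Hence your single-eigenspace embedding at $\theta_{j^\ast}$ has its maximum contraction at distance $d-1$, and your reduction shows no mixture of eigenspaces can do better than Theorem~\ref{thm:vallentin2}.

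What your plan \emph{does} accomplish is the family-by-family verification strategy that the paper carries out in Sections~\ref{sec:small_d}--6: for graphs with classical parameters ($b\ge 1$), for odd graphs, and for diameter~$3$, one checks the analogues of (b) directly and recovers the conjectured equality. The paper then replaces Conjecture~\ref{FVconj} by the revised Conjecture~\ref{Main_conjecture}, which allows the maximum contraction to fall at $r=d-1$ in the antipodal case.
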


In Section \ref{Counterexamples}, we disprove Conjecture \ref{FVconj} by presenting several counterexamples with diameter $4$ and larger. We also prove Conjecture \ref{FVconj} for several families of distance-regular graphs. Most notably, we prove the following result for distance-regular graphs with classical parameters (see Section \ref{sec:classical} for definitions).

\begin{theorem}\label{thm:beta_suff_large}
Let $G$ be a distance-regular graph with classical parameters $(d, b, \alpha, \beta)$ with $b\geq 1$. If $\beta$ is sufficiently large, then
\begin{align*}
     c_2(G)^2 = d^2 \frac{1-w_1(\theta_1)}{1-w_d(\theta_1)} = d^2 \frac{b^{d-1}}{\gauss{d}{1}_b}.
\end{align*}
\end{theorem}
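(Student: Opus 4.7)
The plan is to match Vallentin's lower bound (Theorem~\ref{thm:vallentin}) with an upper bound coming from an explicit spectral embedding, both collapsing to $d^2 b^{d-1}/\gauss{d}{1}_b$. Concretely, I will (i) establish the exact identity $\frac{1-w_1(\theta_1)}{1-w_d(\theta_1)} = \frac{b^{d-1}}{\gauss{d}{1}_b}$ for classical parameters, (ii) show that for sufficiently large $\beta$ the minimum in Theorem~\ref{thm:vallentin} over $j \in \{1,\ldots,d\}$ is attained at $j = 1$, and (iii) exhibit an embedding achieving the matching upper bound.

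For (i), I use $w_1(\theta_j) = \theta_j/k$ together with $k = \gauss{d}{1}_b \beta$ and $\theta_1 = \gauss{d-1}{1}_b(\beta - \alpha) - 1$ to obtain $1 - w_1(\theta_1) = (b^{d-1}\beta + \gauss{d-1}{1}_b\alpha + 1)/(\gauss{d}{1}_b \beta)$ explicitly, and then compute $w_d(\theta_1)$ by solving the three-term recurrence $c_i w_{i-1} + a_i w_i + b_i w_{i+1} = \theta_1 w_i$ using the classical expressions for $b_i$ and $c_i$; the target identity reduces to a direct algebraic simplification. For (iii), I use the canonical faithful embedding of $G$ into the eigenspace of $\theta_1$, for which the squared pairwise Euclidean distance between images of vertices at graph distance $i$ is proportional to $1 - w_i(\theta_1)$. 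Its distortion then equals $d\sqrt{(1 - w_1(\theta_1))/(1 - w_d(\theta_1))}$ provided $(1 - w_i(\theta_1))/i^2$ attains its maximum at $i = 1$ and its minimum at $i = d$, a monotonicity I verify from the same leading-order asymptotic expansion of $w_i(\theta_1)$ in $\beta$.

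The main obstacle is (ii). For each $j \in \{2, \ldots, d\}$ one must show that $\frac{1-w_1(\theta_j)}{1-w_d(\theta_j)}$ strictly exceeds $\frac{b^{d-1}}{\gauss{d}{1}_b}$ once $\beta$ is large. As $\beta \to \infty$, the numerator $1 - w_1(\theta_j) = 1 - \theta_j/k$ converges to $(b^{d-j} + b^{d-j+1} + \cdots + b^{d-1})/\gauss{d}{1}_b$, which is strictly larger than $b^{d-1}/\gauss{d}{1}_b$ for $j \geq 2$ (because $b \geq 1$). Meanwhile $1 - w_d(\theta_j)$ stays trivially at most $2$ and, by a careful recurrence analysis, bounded away from zero. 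Combining these, the limiting ratios $L_j$ are strictly minimized at $j = 1$, and a continuity argument over the finitely many indices $j \in \{2, \ldots, d\}$ then supplies an explicit threshold $\beta_0 = \beta_0(d, b, \alpha)$ beyond which the minimum in Theorem~\ref{thm:vallentin} is attained at $j = 1$. The technical core is producing the uniform lower bound on $1 - w_d(\theta_j)$ across all eigenvalues, which requires simultaneous control of the classical three-term recurrence at every $\theta_j$.
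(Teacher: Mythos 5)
Your steps (i) and (iii) are fine and essentially match the paper, which in fact gets them exactly rather than asymptotically: for classical parameters with $b\geq 1$ one has $1-w_r(\theta_1)=\tfrac{b(1-b^{-r})}{b-1}\bigl(1-w_1(\theta_1)\bigr)$ (and $r\bigl(1-w_1(\theta_1)\bigr)$ when $b=1$) for \emph{all} $\beta$, which yields both the closed form $b^{d-1}/\gauss{d}{1}_b$ and the fact that $(1-w_r(\theta_1))/r^2$ is minimized at $r=d$ (Theorem \ref{Thm_clas}, Corollary \ref{Cor_class_b>1}). The genuine gap is in (ii). The estimates you actually state for the denominators, namely $1-w_d(\theta_j)\leq 2$ and bounded away from $0$, do not imply that the limiting ratios $L_j$ are minimized at $j=1$. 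What you need for $j\geq 2$ is an \emph{upper} bound on $1-w_d(\theta_j)$ of roughly $1+b^{-1}+\cdots+b^{-(j-1)}$, i.e.\ that $w_d(\theta_j)$ is not too negative; with only $1-w_d(\theta_j)\leq 2$ the $j=2$ ratio could, for all you have shown, be as small as $(b^{d-2}+b^{d-1})/(2\gauss{d}{1}_b)$, which is strictly \emph{less} than $b^{d-1}/\gauss{d}{1}_b$ whenever $b>1$. The ``uniform lower bound on $1-w_d(\theta_j)$'' that you identify as the technical core points in the wrong direction and is not even needed: if $1-w_d(\theta_j)\to 0$ the $j$-th ratio blows up and cannot be the minimum. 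So the sentence ``combining these, the limiting ratios $L_j$ are strictly minimized at $j=1$'' is a non sequitur as written, and controlling $w_d(\theta_j)$ from below is precisely the hard content of the theorem.

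The paper closes exactly this step by invoking a nontrivial external result, \cite[Theorem 4.5]{BCIM}: for $\beta$ sufficiently large, $\min_{1\leq j\leq d} w_d(\theta_j)=w_d(\theta_1)$. Combined with $\max_{1\leq j\leq d} w_1(\theta_j)=w_1(\theta_1)$, this gives $\frac{1-w_1(\theta_j)}{1-w_d(\theta_j)}\geq\frac{1-w_1(\theta_1)}{1-w_d(\theta_1)}$ for every $j$, so Vallentin's lower bound meets the upper bound of Corollary \ref{Cor_class_b>1}. If you wish to avoid citing that result, your asymptotic plan can be completed, but you must actually compute the limits of the cosines instead of bounding them crudely: dividing the three-term recurrence by $k$ and letting $\beta\to\infty$ with $d,b,\alpha$ fixed gives $w_i(\theta_j)\to\prod_{h=0}^{i-1}\bigl(\gauss{d-j}{1}_b-\gauss{h}{1}_b\bigr)/\bigl(\gauss{d}{1}_b-\gauss{h}{1}_b\bigr)$; for $i=d$ the factor with $h=d-j$ vanishes, so $w_d(\theta_j)\to 0$ for every $j\geq 1$, all limiting denominators equal $1$, and $L_j=\bigl(\gauss{d}{1}_b-\gauss{d-j}{1}_b\bigr)/\gauss{d}{1}_b$ is indeed strictly minimized at $j=1$; a continuity argument over the finitely many $j$ then produces your threshold $\beta_0$. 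That computation (or the citation replacing it) is the missing ingredient in your proposal.
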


In Section \ref{sec:classical} we verify Conjecture \ref{FVconj} for 
known families of distance-regular graphs with classical parameters.
We believe that Conjecture \ref{FVconj} is true with some minor modifications. Indeed, the proof 
of Theorem \ref{thm:vallentin} in \cite{FV} actually shows the following more general bound\footnote{Indeed, the maximum can range over all $r \in \{ 1, \ldots, d \}$, but we are not aware of any cases for which 
a value of $r \in \{ 1, \ldots, d-2\}$ yields a better bound.} (see Section \ref{Counterexamples} for details).

\begin{theorem}\label{thm:vallentin2}
If $G$ is a distance-regular graph with diameter $d$, then
\[c_2(G)^2 \geq \max_{r = d-1, d } \left\{ r^2 \min_{j \in \{1,\ldots,d\}} 
\left\{\frac{1-w_1(\theta_j)}{1-w_r(\theta_j)} \right\} \right\}.\]
\end{theorem}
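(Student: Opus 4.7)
The plan is to revisit Vallentin's proof of Theorem \ref{thm:vallentin} and observe that the index $d$ appearing there is incidental: the entire argument carries over with $d$ replaced by any fixed $r \in \{1,\ldots,d\}$, and the stated bound then follows by taking a maximum. First I would record the standard reduction. For a faithful embedding $\rho \colon V \to \bbR^n$ of $G$, the squared distance $s_i := \|\rho(x)-\rho(y)\|^2$ at $d(x,y)=i$ depends only on $i$, and the (centered) Gram matrix $M$ of $\rho$ is a positive semidefinite element of the Bose--Mesner algebra of $G$. Expanding $M = \sum_{j=0}^d \lambda_j E_j$ in the primitive idempotents with $\lambda_j \ge 0$, and using the standard identity $(E_j)_{xy} = (m_j/|V|)\, w_{d(x,y)}(\theta_j)$ (with $m_j$ the multiplicity of $\theta_j$), one obtains the decomposition
\[
s_i \;=\; \sum_{j=1}^d \beta_j\bigl(1 - w_i(\theta_j)\bigr), \qquad \beta_j := \tfrac{2\lambda_j m_j}{|V|} \ge 0,
\]
where the $j=0$ term drops out because $w_i(\theta_0)=1$ for all $i$.

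Next, fix any $r \in \{1,\ldots,d\}$. A pair at distance $1$ lower-bounds the expansion and a pair at distance $r$ lower-bounds the contraction, whence $\mathrm{distortion}(\rho)^2 \ge (s_1/1^2)\cdot(r^2/s_r) = r^2 s_1/s_r$. Since $s_1$ and $s_r$ are non-negative combinations of the same weights $\beta_j$, the elementary bound on ratios of positive linear combinations gives
\[
\frac{s_1}{s_r}
\;=\;\frac{\sum_{j=1}^d \beta_j \bigl(1-w_1(\theta_j)\bigr)}{\sum_{j=1}^d \beta_j \bigl(1-w_r(\theta_j)\bigr)}
\;\ge\;\min_{j\in\{1,\ldots,d\}} \frac{1-w_1(\theta_j)}{1-w_r(\theta_j)}.
\]
Combining these inequalities and taking the infimum over $\rho$ yields $c_2(G)^2 \ge r^2 \min_j (1-w_1(\theta_j))/(1-w_r(\theta_j))$ for each $r$. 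Taking the maximum over $r \in \{d-1,d\}$ then produces the stated inequality; as the footnote remarks, the maximum may equally well range over all of $\{1,\ldots,d\}$.

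There is no genuine obstacle beyond Vallentin's original computation, since $d$ played no distinguished role there beyond being the subscript of $w_\bullet$. The only mild technical point is nondegeneracy of the ratio: the minimum on the right should be restricted to those $j$ with $1-w_r(\theta_j)>0$ (indices with $w_r(\theta_j)=1$ contribute nothing to $s_r$ and can be excluded harmlessly), and at least one such $j$ must occur in the support of $\{\beta_j\}$ since $\rho$ being faithful forces $s_r>0$.
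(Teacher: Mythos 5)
Your proposal is correct, but it argues from the opposite side of the duality than the paper does. The paper's proof is a one-line modification of Vallentin's dual-certificate argument: it invokes the Linial--London--Rabinovich formula for $c_2(G)^2$ (a maximum over positive semidefinite matrices with zero row sums) and simply replaces Vallentin's certificate by $Q_\alpha = (k_1-\alpha k_r)A_0 - A_1 + \alpha A_r$, which yields the bound for every $r$ and in particular for $r = d-1, d$; this gives a lower bound valid for \emph{all} embeddings with no further reduction. You instead work on the primal side: expand the centered Gram matrix of a faithful embedding in the primitive idempotents, obtain $s_i = \sum_{j\ge 1}\beta_j(1-w_i(\theta_j))$ with $\beta_j \ge 0$, and finish with a mediant inequality; your handling of the degenerate indices with $w_r(\theta_j)=1$ matches the paper's implicit convention (those terms are treated as $+\infty$ in the minimum). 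Two small points are worth making explicit in your write-up: first, your argument bounds the distortion only of faithful embeddings, so the conclusion $c_2(G)^2 \ge r^2\min_j(1-w_1(\theta_j))/(1-w_r(\theta_j))$ needs the reduction that an optimal embedding of a distance-regular graph may be taken faithful, i.e.\ \cite[Lemma 3.2]{FV}, which the paper adopts as a standing assumption; the paper's dual route does not need this. Second, the assertion that the centered Gram matrix of a faithful embedding lies in the Bose--Mesner algebra deserves a sentence: faithfulness alone does not force constant diagonal, but centering at the centroid does, because the row sums of the squared-distance matrix equal $\sum_i k_i s_i$ independently of the vertex by distance-regularity, whence $M_{xx}$ is constant and $M_{xy}$ depends only on $d(x,y)$. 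With those two remarks supplied, your primal argument is a complete and somewhat more self-contained alternative (it does not use \cite[Corollary 3.5]{LLR}), and like the paper's proof it in fact gives the bound for every $r \in \{1,\ldots,d\}$.
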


We suggest the following revised version of Vallentin's conjecture.

\begin{conjecture}\label{Main_conjecture}(Vallentin's Conjecture, revised)
If $G$ is a distance-regular graph of diameter $d$, then
\begin{align*}
c_2(G)^2 =  \max_{r=d-1,d}\left\{r^2 \frac{1-w_1(\theta_1)}{1-w_r(\theta_1)} \right\}.
\end{align*}
The maximum occurs at $r=d$ unless $G$ is antipodal.
\end{conjecture}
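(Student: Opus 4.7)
The plan is to decompose Conjecture \ref{Main_conjecture} into three tasks: (a) a matching upper bound on $c_2(G)^2$, (b) showing the lower bound of Theorem \ref{thm:vallentin2} is attained at $j=1$, and (c) the antipodal characterization.

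For (b), it suffices to prove that for each $r \in \{d-1,d\}$,
$$\frac{1-w_1(\theta_1)}{1-w_r(\theta_1)} \leq \frac{1-w_1(\theta_j)}{1-w_r(\theta_j)} \text{ for every } j \in \{2,\ldots,d\}.$$
I would attack this through the three-term recurrence
$$c_{i+1} w_{i+1}(\theta) + (a_i - \theta) w_i(\theta) + b_{i-1} w_{i-1}(\theta) = 0$$
combined with the classical fact that $(w_i(\theta_j))_{i=0}^d$ has exactly $j$ sign changes. The heuristic is that $(w_i(\theta_1))_i$ decays slowest, so both $1-w_1(\theta_1)$ is smallest and $1-w_r(\theta_1)$ tends to be smaller than $1-w_r(\theta_j)$ for $r$ close to $d$; the technical difficulty is handling the two factors simultaneously. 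I expect this to be the main obstacle: a uniform structural statement of this kind is precisely what failed in the original Vallentin conjecture at other values of $r$ (as witnessed by the counterexamples of Section \ref{Counterexamples}), so any uniform proof likely requires a global tool --- LP/SDP duality on the Bose--Mesner algebra, or a convexity argument viewing the standard sequences as orthogonal-polynomial evaluations against the spectral measure of $G$ --- rather than a family-by-family verification.

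For (a), I would look for an embedding of the form $\rho(v) = \bigoplus_{j=1}^{d} \alpha_j E_j e_v$, where $E_j$ is the Bose--Mesner primitive idempotent for $\theta_j$, so that $\|\rho(x)-\rho(y)\|^2 = \sum_j \beta_j (1-w_{d(x,y)}(\theta_j))$ for some $\beta_j \geq 0$. The first natural attempt is the pure $E_1$-embedding ($\beta_j = \delta_{j,1}$), whose squared distortion equals $\max_{i,k} k^2(1-w_i(\theta_1))/(i^2(1-w_k(\theta_1)))$. This matches the conjectured value provided $i \mapsto (1-w_i(\theta_1))/i^2$ is maximized at $i=1$ and $k \mapsto k^2/(1-w_k(\theta_1))$ is maximized at $k \in \{d-1,d\}$; both are convexity-type statements on the $\theta_1$-standard sequence, again to be read off from the recurrence. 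Should the pure $E_1$-embedding fall short for some graphs, I would turn on an additional weight $\beta_d > 0$ and tune it so that only the distances $1$ and $r \in \{d-1,d\}$ are simultaneously tight, mirroring the complementary-slackness conditions of the SDP formulation of $c_2(G)$.

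For (c), the split hinges on comparing $(d-1)^2(1-w_d(\theta_1))$ with $d^2(1-w_{d-1}(\theta_1))$. When $G$ is antipodal the cosine sequence satisfies strong constraints (in particular $w_d(\theta_j) = \pm 1$ in the $Q$-polynomial case), forcing the $r=d-1$ term either to dominate or to be the only finite choice; for non-antipodal $G$ I would invoke Terwilliger-style characterizations of antipodality via cosine sequences to control $1-w_d(\theta_1)$ suitably so that the maximum occurs at $r=d$.
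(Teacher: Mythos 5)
There is a fundamental gap: the statement you are addressing is an open conjecture, and your proposal does not close it --- it only restates the decomposition that the paper itself already records (namely that Conjecture \ref{Conjecture1} together with Conjecture \ref{Conjecture2} implies Conjecture \ref{Main_conjecture}) and then defers the two hard ingredients. Your step (b) is verbatim Conjecture \ref{Conjecture2} restricted to $r\in\{d-1,d\}$, and the two ``convexity-type statements'' needed in step (a) are Lemma \ref{simplelemma} (which is fine and proved in the paper) and Conjecture \ref{Conjecture1} (which is not). For neither of these do you supply an argument: the sign-change/orthogonal-polynomial heuristic and the appeal to LP/SDP duality are directions, not proofs, and you yourself flag (b) as ``the main obstacle.'' The paper only establishes these ingredients in special situations --- diameter $3$ (Theorem \ref{diam3_Distortion}), diameter $3$ or $4$ for Conjecture \ref{Conjecture1} (Lemma \ref{Diam3or4}), classical parameters with $b\geq 1$ (Theorem \ref{Thm_clas}, and Theorem \ref{thm:beta_suff_large} only for $\beta$ large), antipodal graphs for $r=d-1,d$ in Conjecture \ref{Conjecture2} (Lemma \ref{lem:conj2_antipodal}), Hermitian forms and Odd graphs --- plus a computational check of feasible intersection arrays; no uniform proof is known. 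Note also that any uniform argument must exploit something special about $r\in\{d-1,d\}$: the analogous comparison fails for the distance-$d$ term alone (that is exactly how the counterexamples in Section \ref{Counterexamples} defeat Conjecture \ref{FVconj}), so a ``global tool'' that does not distinguish these two values of $r$ cannot work as stated.

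Step (c) also contains concrete errors. For an antipodal $r$-cover one has $w_d(\theta_j)=1$ for $j$ even and $w_d(\theta_j)=-\tfrac{1}{r-1}$ for $j$ odd, so $w_d(\theta_j)=\pm 1$ only for double covers; and antipodality does not force the $r=d-1$ term to dominate. Indeed, by \eqref{eq:antipodal} an antipodal $r$-cover has the maximum at $r=d-1$ only when $d\geq 2r$ and $\tfrac{\theta_1}{k}<\tfrac{d^2-2rd+r}{d^2}$; all antipodal graphs of diameter $3$ (e.g.\ Taylor graphs) have the maximum at $r=d$, as Theorem \ref{diam3_Distortion} shows. The conjecture's dichotomy is one-sided (non-antipodal forces $r=d$; antipodal graphs may go either way), so the dichotomy you aim to prove in (c) is not even the correct statement, and the proposed route via ``Terwilliger-style characterizations'' is again unargued. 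In short, the proposal is a reasonable research plan that mirrors the paper's framework, but it proves nothing beyond what is already in the paper and cannot be accepted as a proof of Conjecture \ref{Main_conjecture}.
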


We prove this conjecture for distance-regular graphs of diameter $3$ in Theorem \ref{diam3_Distortion}.
We also provide partial results towards this conjecture for distance-regular graphs of diameter $4$ in Section \ref{sec:small_d}.
In addition to Conjecture \ref{Main_conjecture},
we also propose the following two conjectures. 
\begin{conjecture}\label{Conjecture1}
Let $G$ be a distance-regular graph of diameter $d$ with eigenvalues $\theta_0 > \theta_1 > \ldots> \theta_d$. If $(w_0(\theta_1),w_1(\theta_1),\ldots,w_d(\theta_1))$ is the cosine sequence of $\theta_1$, then
 \[ \min_{1\leq r\leq d}\left\{ \frac{1-w_r(\theta_1)}{r^2}\right\} = \min_{r = d-1,d }\left\{ \frac{1-w_r(\theta_1)}{r^2}\right\}.\]
The minimum occurs at $r=d$ unless $G$ is antipodal.
\end{conjecture}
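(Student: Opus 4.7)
The plan is to study $f(r) := (1 - w_r(\theta_1))/r^2$ as a function of $r \in \{1, \ldots, d\}$ and argue that its minimum must lie in $\{d-1, d\}$. The basic tool is the three-term recurrence
\[
c_r\, w_{r-1}(\theta_1) + a_r\, w_r(\theta_1) + b_r\, w_{r+1}(\theta_1) = \theta_1\, w_r(\theta_1), \qquad 1 \leq r \leq d-1,
\]
(with $a_r := k - b_r - c_r$), together with the classical fact that for the second largest eigenvalue the cosine sequence is strictly decreasing, $1 = w_0(\theta_1) > w_1(\theta_1) > \cdots > w_d(\theta_1)$ (see \cite[Chapter 4]{BCN}). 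Setting $\delta_s := w_s(\theta_1) - w_{s+1}(\theta_1) > 0$ and telescoping gives $r^2 f(r) = \sum_{s=0}^{r-1} \delta_s$, so the comparison between consecutive values of $f$ reduces to a single inequality:
\[
f(r+1) \leq f(r) \iff r^2\, \delta_r \leq (2r+1) \sum_{s=0}^{r-1} \delta_s.
\]

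Using $a_r + b_r + c_r = k$, the recurrence rearranges to the first-order relation
\[
b_r\, \delta_r = c_r\, \delta_{r-1} + (k - \theta_1)\, w_r(\theta_1),
\]
so while $w_r(\theta_1) > 0$ the increments grow roughly like $(c_r/b_r)\, \delta_{r-1}$ (with a positive perturbation from the $(k-\theta_1)w_r$ term), and they shrink after the sign change of the cosine sequence. The bulk of the proof would consist of showing that the accumulated sum $\sum_{s<r} \delta_s$ always dominates $r^2\, \delta_r/(2r+1)$ for $r \leq d-2$, forcing $f(d) \leq f(r)$ for every such $r$ in the non-antipodal case. I would attempt to combine the first-order relation above with a Christoffel--Darboux-type identity for the orthogonal polynomials $v_i$ to obtain a quantitative convexity estimate for the partial sums of the $\delta_s$.

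The antipodal exception enters through the symmetry $w_{d-i}(\theta_1) = \epsilon\, w_i(\theta_1)$ with $\epsilon \in \{-1, +1\}$: in the generic antipodal case $\epsilon = -1$, one has $w_{d-1}(\theta_1) = -w_1(\theta_1)$ and $\delta_{d-1} = \delta_0 = 1 - \theta_1/k$, and the difference $f(d-1) - f(d)$ becomes an explicit expression in $d$ and $w_1(\theta_1)$ whose sign can tip either way. This is precisely the mechanism by which the minimum can migrate from $r=d$ to $r=d-1$, and in the antipodal case the proof reduces to this single explicit comparison. The main obstacle is the early regime $r \leq r^{\ast}$ where $w_r(\theta_1) > 0$: monotonicity of the cosine sequence alone is insufficient to rule out $f(r_0) < f(d)$ for some small $r_0$, and the delicate point is to leverage the orthogonality of the $v_i$'s against the spectral measure of $G$ to bound $\delta_r$ quantitatively in this regime. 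Verifying the conjectured non-antipodal exclusion in full generality is, I expect, the hard step, since the first-order relation for $\delta_r$ is driven by intersection numbers $c_r/b_r$ that can vary substantially across known infinite families.
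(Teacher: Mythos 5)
The statement you are proving is Conjecture \ref{Conjecture1}, which the paper does \emph{not} prove in general: it is posed as an open conjecture, established only for special cases (diameter $3$ and $4$ via Lemma \ref{Diam3or4}, classical parameters with $b\geq 1$ in Theorem \ref{Thm_clas}, Hermitian forms graphs, Odd graphs) and verified computationally against Brouwer's tables of feasible intersection arrays. Your proposal likewise does not constitute a proof: the decisive step, namely that $r^2\delta_r \leq (2r+1)\sum_{s<r}\delta_s$ for all $r\leq d-2$, is exactly the content of the conjecture and is nowhere established; the Christoffel--Darboux/spectral-measure argument that is supposed to supply the ``quantitative convexity estimate'' is only announced, and you yourself flag it as the hard step. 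Monotonicity of the cosine sequence of $\theta_1$ together with the first-order relation $b_r\delta_r = c_r\delta_{r-1}+(k-\theta_1)w_r(\theta_1)$ cannot suffice as stated, because the ratios $c_r/b_r$ are essentially unconstrained across distance-regular graphs, and the paper's counterexamples to Conjecture \ref{FVconj} (Hadamard graphs, the Golay-code coset graphs) show that the comparison between $f(d-1)$ and $f(d)$ really does tip; so any general argument must inject structural input beyond the recurrence. There is also a concrete error in your treatment of the antipodal case: the symmetry $w_{d-i}(\theta_1)=\epsilon\,w_i(\theta_1)$ with $\epsilon\in\{-1,+1\}$ holds only for antipodal double covers, whereas for an antipodal $r$-cover one has $w_d(\theta_1)=-\tfrac{1}{r-1}$ and $w_{d-1}(\theta_1)=-\tfrac{1}{r-1}\cdot\tfrac{\theta_1}{k}$ (as in the proof of Lemma \ref{lem:conj2_antipodal}), so your ``single explicit comparison'' is set up with the wrong constants; moreover even in the antipodal case the conjecture still requires excluding all $r\leq d-2$, which your reduction does not address.

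For comparison, the part of your plan that does go through (the telescoping bound relating $f(pq)$ and $f(q)$) is obtained in the paper by an even simpler device: the triangle inequality for the embedding $\rho$ (Lemma \ref{simplelemma}) gives $\frac{1-w_{pq}(\theta_1)}{(pq)^2}\leq\frac{1-w_q(\theta_1)}{q^2}$ directly, and hence Corollary \ref{HalfWay}, localizing the minimum to $r\geq\lceil\frac{d+1}{2}\rceil$; this already settles the conjecture for $d\leq 4$ but nothing more. Beyond that the paper proceeds family by family with explicit cosine computations (e.g.\ the closed forms \eqref{eqnb=1} and \eqref{eqnb>1} for classical parameters). If you want to push your program, the honest statement of your contribution is a reduction of the conjecture to the quantitative estimate on the increments $\delta_r$; proving that estimate, or even formulating it so that it is consistent with the antipodal examples where the minimum migrates to $d-1$, remains entirely open.
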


\begin{conjecture}\label{Conjecture2}
Let $G$ be a distance-regular graph with diameter $d$ and eigenvalues $\theta_0 > \theta_1 > \ldots> \theta_d$. For $1\leq j\leq d$, let $(w_0(\theta_j),w_1(\theta_j),\ldots,w_d(\theta_j))$ denote the cosine sequence of $\theta_j$. For any $r\in \{2,3,\ldots,d\}$, the following holds:
\[
    \max_{1\leq j\leq d}\left\{ \frac{1-w_r(\theta_j)}{1-w_1(\theta_j)}\right\} = \frac{1-w_r(\theta_1)}{1-w_1(\theta_1)}.
\]
\end{conjecture}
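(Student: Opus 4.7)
The plan is to first reformulate the conjecture in a cleaner extremal form. Since $1 - w_1(\theta_j) = (k - \theta_j)/k > 0$ for every $j \geq 1$, clearing the positive denominators and dividing by $(k-\theta_1)(k-\theta_j)/k$ shows that the inequality
\[
\frac{1 - w_r(\theta_j)}{1 - w_1(\theta_j)} \leq \frac{1 - w_r(\theta_1)}{1 - w_1(\theta_1)}
\]
is equivalent to $H_r(\theta_j) \leq H_r(\theta_1)$, where
\[
H_r(\theta) := \frac{1 - w_r(\theta)}{k - \theta}.
\]
Since $w_r(k) = 1$ by the normalization of the cosine sequence, the numerator vanishes at $\theta = k$, so $H_r$ is in fact a polynomial in $\theta$ of degree $r - 1$ with positive leading coefficient. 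In this language the conjecture asks us to prove that the polynomial $H_r$ attains its maximum on the finite set $\{\theta_1, \ldots, \theta_d\}$ at $\theta = \theta_1$.

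The three-term recurrence $b_r w_{r+1}(\theta) + (a_r - \theta) w_r(\theta) + c_r w_{r-1}(\theta) = 0$ for the cosines translates, via the substitution $w_r = 1 - (k - \theta) H_r$, into the affine recurrence
\[
b_r H_{r+1}(\theta) + (a_r - \theta) H_r(\theta) + c_r H_{r-1}(\theta) = 1, \qquad H_0 = 0, \quad H_1 = 1/k,
\]
which computes every $H_r$ directly from the intersection array. For $r = 2$ one obtains $H_2(\theta) = (b_1 + 1 + \theta)/(k b_1)$, a linear polynomial with strictly positive slope, so $H_2$ is increasing and is maximized on any finite set of real eigenvalues at the largest element. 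This settles the conjecture for $r = 2$ and in particular for every strongly regular graph.

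For $r \geq 3$ the polynomial $H_r$ need not be monotone on the convex hull of the eigenvalues, and the location of its maximum is a genuine constraint on the intersection array. A natural induction is to assume $H_s(\theta_1) \geq H_s(\theta_j)$ for every $s \leq r$ and every $j \geq 2$, and then subtract the recurrence at $\theta_j$ from the recurrence at $\theta_1$ to obtain
\[
b_r \bigl(H_{r+1}(\theta_1) - H_{r+1}(\theta_j)\bigr) = (\theta_1 - a_r)\bigl(H_r(\theta_1) - H_r(\theta_j)\bigr) - c_r \bigl(H_{r-1}(\theta_1) - H_{r-1}(\theta_j)\bigr) + (\theta_1 - \theta_j) H_r(\theta_j).
\]
Here $\theta_1 - \theta_j > 0$, so the last term carries the sign of $H_r(\theta_j)$, and the first term is nonnegative whenever $\theta_1 \geq a_r$. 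The main obstacle is the middle term: under the inductive hypothesis it is nonpositive and can outweigh the others. Closing the induction appears to require strengthening the hypothesis to a two-sided estimate of the form $(\theta_1 - \theta_j)\alpha_r \leq H_r(\theta_1) - H_r(\theta_j) \leq (\theta_1 - \theta_j)\beta_r$, with nonnegative $\alpha_r, \beta_r$ read off from the intersection numbers, so that the positive contribution $(\theta_1 - \theta_j) H_r(\theta_j)$ provably dominates.

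As a complementary and more tractable route, the conjecture can be checked case by case for the principal families with closed-form cosine sequences: Hamming, Johnson, Grassmann, bilinear forms, dual polar graphs, and more generally the distance-regular graphs with classical parameters treated in Section~\ref{sec:classical}. In these cases the cosines $w_r(\theta_j)$ are given by dual Hahn, Krawtchouk, $q$-Krawtchouk or Askey--Wilson polynomials, and the inequality $H_r(\theta_1) \geq H_r(\theta_j)$ reduces to an explicit monotonicity or positivity statement within the Askey scheme. Together with Theorem~\ref{diam3_Distortion} for diameter three and Theorem~\ref{thm:beta_suff_large} for classical parameters with $\beta$ sufficiently large, this case-by-case analysis corroborates the conjecture and suggests that a uniform proof should couple the recurrence above with a structural positivity property distinguishing the $\theta_1$-row in the character table of the Bose--Mesner algebra.
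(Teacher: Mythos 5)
You have not proved the statement, and in fairness neither does the paper: this is Conjecture \ref{Conjecture2}, which the paper leaves open in general and establishes only for special families --- antipodal $r$-covers at $r=d-1,d$ (Lemma \ref{lem:conj2_antipodal}), diameter $3$ (Lemma \ref{Diam3_Conj2}), generalized polygons, Hermitian forms graphs (\S\ref{subsec:hermitian}), Odd graphs, and classical parameters with $\beta$ sufficiently large (Theorem \ref{thm:beta_suff_large}, via the result of \cite{BCIM} that $\min_j w_d(\theta_j)=w_d(\theta_1)$ there) --- together with computer verification against Brouwer's tables. So the benchmark is not a general proof you failed to match; but judged as a proof of the stated claim, your write-up has a genuine and admitted gap.

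What you do prove is correct and clean: since $1-w_1(\theta_j)=(k-\theta_j)/k>0$, the ratio equals $k\,H_r(\theta_j)$ with $H_r(\theta)=(1-w_r(\theta))/(k-\theta)$, the affine recurrence $b_rH_{r+1}+(a_r-\theta)H_r+c_rH_{r-1}=1$ with $H_0=0$, $H_1=1/k$ is right, and $H_2(\theta)=(b_1+1+\theta)/(kb_1)$ settles $r=2$ --- this is exactly the paper's observation $\frac{1-w_2(\theta_j)}{1-w_1(\theta_j)}=\frac{\theta_0+\theta_j-a_1}{b_1}$ inside Lemma \ref{Diam3_Conj2}. Beyond $r=2$, however, nothing is established: the inductive step does not close because, as you note, the term $-c_r\bigl(H_{r-1}(\theta_1)-H_{r-1}(\theta_j)\bigr)$ has the unfavorable sign; the strengthened two-sided hypothesis with constants $\alpha_r,\beta_r$ is never exhibited, and no argument is given that such constants exist; the auxiliary condition $\theta_1\geq a_r$ is assumed without justification and need not hold; and the ``complementary route'' through the classical families is only described, not executed. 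The paper's own partial proofs show that extra structural input is unavoidable even in the smallest open cases --- e.g.\ the $r=3$, $d=3$ case in Lemma \ref{Diam3_Conj2} needs eigenvalue interlacing to get $a_3-\theta_2\geq 0$, and the antipodal and Golay-code examples show the ratios must be controlled eigenvalue by eigenvalue. In short: a sensible reformulation and a correct $r=2$ case, but the heart of the statement ($r\geq 3$, general $d$) remains exactly as open in your proposal as it is in the paper.
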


Note that Conjecture \ref{Conjecture1} and Conjecture \ref{Conjecture2} imply Conjecture \ref{Main_conjecture}. We have verified Conjecture \ref{Conjecture1} and Conjecture \ref{Conjecture2} (and therefore Conjecture \ref{Main_conjecture}) for all feasible intersection arrays given in Brouwer's list in \cite{BrouwerIA} (see \S\ref{subsec:IAs}).

\section{Optimal Embeddings} \label{sec:opt_embdedding}

Let $G$ be a distance-regular graph with $n$ vertices and diameter $d$. Consider the graph representation of $G$ on the eigenspace corresponding to an eigenvalue $\theta$ (see \cite[\S13]{G} for definitions). For us, usually $\theta = \theta_1$. 
Suppose $\theta$ has multiplicity $m$. Let $U_{\theta}$ be a $n \times m$ matrix with columns forming an orthonormal basis for the eigenspace associated with $\theta$. Denote by $u_{\theta}(i) \in \mathbb{R}^m$ the $i$-th row of $U_{\theta}$. Note that the Euclidean inner product $(u_{\theta}(i),u_{\theta}(i))$ does not depend on the vertex $i$ (see \cite[p. 262]{G}). If $i$ and $j$ are vertices in $G$ at distance $r$, then the $r$-th cosine $w_r(\theta)$ may be defined as: 
\begin{align*}
    w_r(\theta) = \frac{(u_{\theta}(i),u_{\theta}(j))}{(u_{\theta}(i),u_{\theta}(i))}.
\end{align*}
If we take the inner product of both sides of the equation 
\begin{align*}
    \theta u_{\theta}(i) = \sum_{j \sim i} u_{\theta}(j)
\end{align*}
with $u_{\theta}(\ell)$, where $\ell$ is a vertex with $d(\ell,i) = r$, then  we obtain the recurrence 
$$\theta w_r(\theta) = c_r w_{r-1}(\theta) + a_r w_r(\theta) + b_r w_{r+1}(\theta),$$
where $a_r=k-b_r-c_r$. It is easy to verify that  
$w_{r}(\theta_j) = \frac{v_{r}(\theta_j)}{v_r(\theta_0)}$
and we recover our definition of $w_r$ from Section \ref{sec:intro}.

Let us define the following embedding $\rho: V(G) \to \mathbb{R}^m$ as
\begin{equation}\label{eq:rhodef}
\rho: j \to \frac{1}{\sqrt{2(u_{\theta}(i),u_{\theta}(i))(1-w_1(\theta))}}u_{\theta}(j).\end{equation}
Lemma 13.3.1 of \cite{G} implies that $\rho$ is an injective function, thus, an embedding.
For two vertices $x$ and $y$ at distance $r$, we denote $S_r := \|\rho(x)-\rho(y)\|$. Using the above definition of $\rho$, we get that
$$
S_r=\|\rho(x)-\rho(y)\|=\frac{\sqrt{1-w_r(\theta)}}{\sqrt{1-w_1(\theta)}}.
$$

According to \cite[Claim 2.2]{LM} the most expanded pairs of vertices are adjacent. For the sake of completeness, we include the proof below for the embedding $\rho$. 
\begin{lemma}\label{simplelemma}
   Let $p$ and $q$ are two positive integers such that $pq\leq d$. Then 
   $$\frac{S_{pq}}{pq} \leq \frac{S_q}{q}.$$
   If $q=1$, then any pair of adjacent vertices are most expanded for $\rho$.
\end{lemma}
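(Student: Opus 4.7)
The plan is to apply the triangle inequality in $\mathbb{R}^m$ to a sequence of waypoints along a shortest path in $G$, exploiting the fact that $\rho$ is faithful, so $\|\rho(u)-\rho(v)\|$ depends only on $d(u,v)$.

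Concretely, fix two vertices $x,y$ with $d(x,y)=pq$; such a pair exists because $pq\le d$. Choose a shortest path $x=z_0,z_1,\ldots,z_{pq}=y$ and let $w_i := z_{iq}$ for $i=0,1,\ldots,p$. Since the $z_i$ lie on a geodesic, we have $d(w_i,w_{i+1})=q$ for each $i\in\{0,\ldots,p-1\}$, and by faithfulness of $\rho$ this gives $\|\rho(w_i)-\rho(w_{i+1})\| = S_q$. The triangle inequality in $\mathbb{R}^m$ then yields
\[
S_{pq} \;=\; \|\rho(w_0)-\rho(w_p)\| \;\leq\; \sum_{i=0}^{p-1}\|\rho(w_i)-\rho(w_{i+1})\| \;=\; p\,S_q,
\]
and dividing by $pq$ produces the claimed bound $S_{pq}/(pq)\le S_q/q$.

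For the second assertion, specialize to $q=1$: for every $p\in\{1,\ldots,d\}$ the inequality becomes $S_p/p \le S_1$, which exactly says that the supremum $\sup_{x,y}\|\rho(x)-\rho(y)\|/d(x,y)$ defining the expansion of $\rho$ is attained on any adjacent pair.

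There is no serious obstacle here; the argument is a one-line application of the triangle inequality, and the only subtle point is checking that faithfulness of $\rho$ is available. This is guaranteed by the explicit formula $\|\rho(x)-\rho(y)\|=\sqrt{(1-w_r(\theta))/(1-w_1(\theta))}$ derived just above from the cosine sequence, which depends solely on $r=d(x,y)$.
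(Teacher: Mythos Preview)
Your proof is correct and follows essentially the same approach as the paper: pick waypoints at distance $q$ along a geodesic between two vertices at distance $pq$, apply the triangle inequality in $\mathbb{R}^m$, and use that $\|\rho(u)-\rho(v)\|$ depends only on $d(u,v)$. Your version is slightly more explicit about constructing the waypoints from a shortest path and invoking faithfulness, but the argument is the same.
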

\begin{proof}
Consider two vertices $x$ and $y$ at distance $pq$. There exists vertices $x_0=x,x_1,\ldots,x_p=y$ such that $d(x_i,x_{i+1}) = q$ for all $i = 0,\ldots,p-1$. Using the triangle inequality, we obtain that
\[
    \frac{S_{pq}}{pq} = \frac{\|\rho(x)-\rho(y)\|}{d(x,y)} \leq \frac{\sum_{i=0}^{p-1}\|\rho(x_i)-\rho(x_{i+1})\|}{pq} = \frac{pS_q}{pq} = \frac{S_q}{q}. 
\]
Taking $q=1$ above proves the second assertion.\qedhere
\end{proof}

Thus, 
\begin{align*}
    \text{expansion}(\rho) = \sup_{x,y \in V(G)} \frac{\|\rho(x)-\rho(y)\|}{d(x,y)} = 
    \frac{S_1}{1} = 
    \frac{\sqrt{1-w_1(\theta)}}{\sqrt{1-w_1(\theta)}} = 1,
\end{align*}
and therefore, distortion$(\rho)$ $=$ contraction$(\rho)=\max_{1\leq r\leq d}\left\{\frac{r\sqrt{1-w_1(\theta_1)}}{\sqrt{1-w_r(\theta_1)}}\right\}$.

Conjecture \ref{Conjecture2} implies that the embedding $\rho$ in \eqref{eq:rhodef} with $\theta = \theta_1$ is an optimal embedding for any distance-regular graphs. 

We obtain some evidence for Conjecture \ref{Conjecture1}.
\begin{corollary}\label{HalfWay}
Let $G$ be a distance-regular graph of diameter $d$ with eigenvalues $\theta_0 > \theta_1 > \ldots> \theta_d$. If $(w_0(\theta_1),w_1(\theta_1),\ldots,w_d(\theta_1))$ is the cosine sequence of $\theta_1$, then
\[
    \min_{1\leq r\leq d}\left\{ \frac{1-w_r(\theta_1)}{r^2}\right\} = \min_{\lceil \frac{d+1}{2}\rceil \leq r \leq d}\left\{ \frac{1-w_r(\theta_1)}{r^2}\right\}.
\]
\end{corollary}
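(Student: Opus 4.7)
The plan is to apply Lemma \ref{simplelemma} directly to the function $f(r) := (1 - w_r(\theta_1))/r^2$. Recall from the discussion preceding Lemma \ref{simplelemma} that $S_r^2 = (1-w_r(\theta_1))/(1-w_1(\theta_1))$, and that $1 - w_1(\theta_1) > 0$ since $\theta_1 < \theta_0 = k$. Squaring the inequality $S_{pq}/(pq) \leq S_q/q$ from Lemma \ref{simplelemma} and clearing the common positive denominator $1-w_1(\theta_1)$, I obtain the clean monotonicity statement
\[
\frac{1 - w_{pq}(\theta_1)}{(pq)^2} \;\leq\; \frac{1 - w_q(\theta_1)}{q^2}
\qquad \text{for all positive integers } p,q \text{ with } pq \leq d,
\]
i.e.\ $f(pq) \leq f(q)$ under this condition.

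Next I would argue that any $r$ in the lower half-range $\{1,\ldots,\lfloor d/2 \rfloor\}$ is dominated by some $r'$ in the upper half-range $\{\lceil (d+1)/2 \rceil,\ldots,d\}$. Fix such an $r$ and set $p := \lfloor d/r \rfloor$, so that $pr \leq d < (p+1)r$. Then $pr > d - r \geq d - \lfloor d/2 \rfloor = \lceil d/2 \rceil$, and since $pr$ is an integer, this forces $pr \geq \lceil d/2\rceil + 1 \geq \lfloor d/2\rfloor + 1 = \lceil (d+1)/2\rceil$. Thus $pr$ lies in the upper half-range and $pr \leq d$, so the monotonicity statement above yields $f(pr) \leq f(r)$.

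It follows that every value taken by $f$ on $\{1,\ldots,\lfloor d/2\rfloor\}$ is matched or beaten by a value on $\{\lceil (d+1)/2\rceil,\ldots,d\}$, which proves the claimed equality of minima. There is no serious obstacle here; the proof is essentially a direct corollary of Lemma \ref{simplelemma} once one observes that $\lceil (d+1)/2 \rceil = \lfloor d/2\rfloor + 1$ and chooses the multiple $pr$ appropriately. The only mildly delicate point is verifying the parity/ceiling arithmetic uniformly for $d$ even and $d$ odd, which the computation above handles in one line.
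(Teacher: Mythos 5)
Your proposal is correct and follows essentially the same route as the paper: the paper's proof also reduces the statement to Lemma \ref{simplelemma} by noting that every $q$ in the lower half-range has a multiple $pq$ with $\lceil\tfrac{d+1}{2}\rceil \leq pq \leq d$, which is exactly what your explicit choice $p = \lfloor d/r \rfloor$ verifies. Your write-up merely spells out the squaring step and the ceiling arithmetic that the paper leaves implicit.
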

\begin{proof}
We apply Lemma \ref{simplelemma}.
For any $q$, such that $q\leq \lfloor\frac{d+1}{2}\rfloor$, there exist a positive integer $p$ such that $\lceil\frac{d+1}{2} \rceil\leq pq \leq d$.  
\end{proof}

\section{Antipodal Graphs and Counterexamples} \label{Counterexamples}

Linial, London and Rabinovich \cite[Corollary 3.5]{LLR} showed that the least Euclidean distortion of a graph $G=(V,E)$ is given by the equation:
\begin{equation*}
    c_2(G)^2=\max_{Q}\frac{\sum_{(x,y):q_{x,y}>0}d(x,y)^2q_{x,y}}{\sum_{(x,y):q_{x,y}<0}d(x,y)^2(-q_{x,y})},
\end{equation*}
where the maximum is taken over all positive semidefinite $V\times V$ matrix $Q=(q_{x,y})_{x,y\in V}$ having each row sum equal to zero. Given an embedding $\rho$ with minimal distortion $c_2(G)$, Linial and Magen \cite[Claim 1.4]{LM} proved that a matrix $Q$ attaining equality above must satisfy the following properties:
\begin{itemize}
    \item $q_{x,y}>0$ only for the most contracted pairs of vertices $(x,y)$, i.e., the pairs with $d(x,y)/\|\rho(x)-\rho(y)\|=$contraction$(\rho)$,
    \item $q_{x,y}<0$ only for the most expanded pairs $(x,y)$, i.e., the pairs with $\|\rho(x)-\rho(y)\|/d(x,y)=$expansion$(\rho)$,
    \item $q_{x,y}=0$ for all the other pairs of vertices.
\end{itemize}
Moreover, Linial and Magen \cite[Claim 2.2]{LM} showed that the most expanded pairs of vertices are always the pairs of adjacent vertices.

In his proof of Theorem \ref{thm:vallentin}, Vallentin used the above facts and assumed that the most contracted pairs of vertices are those at distance $d = \text{diam}(G)$, with the comment (see \cite[p. 6]{FV}):
\begin{center}
{\em so the lower bound can only be tight when the most contracted pairs are at distance $d$}.\end{center}

In this section, we show that this is not always the case and present several distance-regular graphs for which an optimal embedding has the property that the most contracted pairs of vertices are the pairs of vertices at distance $d-1$. All the examples we found are antipodal. In agreement with Conjecture \ref{Main_conjecture}, we are not aware of any distance-regular graphs for which the most contracted vertices occur at a distance other than $d-1$ or $d$.

\subsection{Proof of Theorem \ref{thm:vallentin2}}\label{subsec:vall2}

With these ideas in mind, let us now prove Theorem \ref{thm:vallentin2}. This can be done by using essentially the same argument as Vallentin's proof of Theorem \ref{thm:vallentin}. For $1\leq r\leq d$,
on page 6 of \cite{FV}, use the matrix $Q_\alpha = (k_1-\alpha k_r)A_0 - A_1 + \alpha A_r$
instead of $Q_\alpha = (k_1-\alpha k_d)A_0 - A_1 + \alpha A_d$. The same proof implies that
\[c_2(G)^2 \geq r^2 \min_{j \in \{1,\ldots,d\}} \left\{\frac{1-w_1(\theta_j)}{1-w_r(\theta_j)} \right\}.\]
Therefore, 
\begin{align*}
c_2(G)^2 &\geq \max_{r:1\leq r\leq d} r^2 \min_{j \in \{1,\ldots,d\}} \left\{\frac{1-w_1(\theta_j)}{1-w_r(\theta_j)} \right\}.
\end{align*}

\subsection{Antipodal Graphs}

A distance-regular graph is antipodal whenever $b_i = c_{d-i}$ for all $i = 0,1,\ldots,d$, 
except possibly $i = \lfloor d/2\rfloor$. If $G$ is antipodal, then by definition, 
the distance-$d$ graph $G_d$ is a disjoint union of cliques. These cliques are called the \emph{fibers} 
of $G$. These cliques are all the same size. We also say $G$ is an antipodal 
$r$-cover, where $r$ is the size of the cliques of $G_d$. When $r=2$, we say antipodal 
double-cover. The distinct eigenvalues of $K_n$ are $n-1$ and $-1$. We know 
sign($w_d(\theta_i)$) $=(-1)^i$, thus for an antipodal $r$-cover we have
\begin{align*}
    w_d(\theta_j) = \begin{cases}
    1, &\text{if } j \text{ is even,}\\
    -\frac{1}{r-1}, &\text{if } j \text{ is odd}.
    \end{cases}
\end{align*}

First, we show Conjecture \ref{Conjecture2} for $r = d-1$ and $d$ in case of antipodal graphs.
\begin{lemma}\label{lem:conj2_antipodal}
Let $G$ be an antipodal $r$-cover. Let $w_0(\theta_j),w_1(\theta_j),\ldots,w_d(\theta_j)$ denote the cosine sequence of $\theta_j$. For any $r = d-1, d$, we have 
\[
    \max_{1\leq j\leq d}\left\{ \frac{1-w_r(\theta_j)}{1-w_1(\theta_j)}\right\} = \frac{1-w_r(\theta_1)}{1-w_1(\theta_1)}.
\]
\end{lemma}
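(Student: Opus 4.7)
The plan is to reduce both cases to a short calculation after establishing a reflection identity for cosine sequences of antipodal graphs. The antipodal condition $b_i = c_{d-i}$ combined with $a_i = k - b_i - c_i$ gives $a_{d-i} = a_i$. Setting $\tilde w_i := w_{d-i}(\theta_j)/w_d(\theta_j)$, which is well-defined since $w_d(\theta_j) \in \{1, -1/(r-1)\}$ is nonzero, I would substitute into the three-term recurrence and observe that $\tilde w_i$ satisfies the same recurrence and the same initial condition $\tilde w_0 = 1$ as $w_i(\theta_j)$, forcing
\[
w_{d-i}(\theta_j) = w_i(\theta_j)\, w_d(\theta_j) \qquad (0 \le i \le d).
\]

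For the distance-$d$ case, the formula for $w_d(\theta_j)$ recalled just before the lemma makes the ratio $\frac{1-w_d(\theta_j)}{1-w_1(\theta_j)}$ vanish for even $j$ and equal $\frac{r/(r-1)}{1-w_1(\theta_j)}$ for odd $j$; since $w_1(\theta_j) = \theta_j/k$ is strictly decreasing in $j$, the odd-index maximum is attained at $j = 1$ and dominates the even-index value $0$. For the distance-$(d-1)$ case, the reflection identity with $i = 1$ gives $w_{d-1}(\theta_j) = w_1(\theta_j)\, w_d(\theta_j)$. For even $j$ the ratio collapses to $1$; for odd $j$ it equals
\[
f(w_1(\theta_j)) := \frac{(r-1) + w_1(\theta_j)}{(r-1)(1 - w_1(\theta_j))},
\]
and a quick differentiation $f'(t) = r/((r-1)(1-t)^2) > 0$ shows $f$ is strictly increasing, so the odd-index maximum again lies at $j = 1$.

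The main obstacle is the final comparison of the value at $j = 1$ with the even-$j$ value $1$ in the distance-$(d-1)$ case: clearing denominators reduces this to $w_1(\theta_1) \ge 0$, i.e., $\theta_1 \ge 0$. I would settle this by invoking Smith's classical result that a connected graph with at most one positive eigenvalue is complete multipartite and hence has diameter at most $2$; for antipodal graphs of diameter $d \ge 3$ this gives $\theta_1 > 0$ outright, while in the boundary case $d = 2$ antipodality forces $c_2 = b_0 = k$, so $G$ is complete multipartite with $\theta_1 = 0$ and both sides of the desired identity equal $1$. Once the reflection identity and this sign input are in place, the rest of the argument is a routine algebraic comparison.
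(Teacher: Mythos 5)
Your final case analysis is correct and in fact more explicit than the paper's own proof (which stops after computing the even/odd ratios and leaves the monotonicity and the comparison with $1$, hence the input $\theta_1 \ge 0$, implicit), but the reflection identity you build it on is false as stated. The antipodal condition is $b_i = c_{d-i}$ for all $i$ \emph{except possibly} $i = \lfloor d/2 \rfloor$, so for an antipodal $r$-cover with $r \ge 3$ the intersection array is in general not symmetric: $a_{d-i} = a_i$ can fail, and then $\tilde w_i = w_{d-i}(\theta_j)/w_d(\theta_j)$ does not satisfy the same three-term recurrence as $w_i(\theta_j)$. Concretely, the line graph of the Petersen graph is an antipodal $3$-cover of $K_5$ with intersection array $\{4,2,1;1,1,4\}$, where $b_1 = 2 \ne 1 = c_2$ and $a_1 = 1 \ne 2 = a_2$; its cosine sequence for $\theta_1 = 2$ is $\bigl(1, \tfrac12, -\tfrac14, -\tfrac12\bigr)$, and $w_1(\theta_1) = \tfrac12 \ne \tfrac18 = w_2(\theta_1)\,w_d(\theta_1)$, so your identity $w_{d-i} = w_i w_d$ already fails at $i=2$.

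Fortunately the only instance you actually use is $i=1$, and that one is true and needs no uniqueness-of-recurrence argument: since $b_d = 0$ and (for $d \ge 2$, index $0$ not being exceptional) $c_d = b_0 = k$, hence $a_d = 0$, the recurrence at distance $d$ reads $\theta_j w_d(\theta_j) = k\, w_{d-1}(\theta_j)$, i.e.\ $w_{d-1}(\theta_j) = \tfrac{\theta_j}{k}\, w_d(\theta_j) = w_1(\theta_j) w_d(\theta_j)$ --- which is exactly the single step the paper takes. With that repair your argument goes through unchanged: the even-$j$ ratios are $0$ (for distance $d$) and $1$ (for distance $d-1$), the odd-$j$ ratios are increasing in $\theta_j/k$ and hence maximized at $j=1$, and the comparison with $1$ in the distance-$(d-1)$ case reduces to $\theta_1 \ge 0$, which your appeal to Smith's theorem settles (eigenvalue interlacing with an induced path on three vertices would do as well, since $d \ge 2$, and only $\theta_1 \ge 0$ is needed for the maximum to be attained at $j=1$); your separate treatment of the degenerate diameter-$2$ complete multipartite case is fine.
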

\begin{proof}
For $r=d$, we have $\min\{w_d(\theta_j)\} = w_d(\theta_1)$ and $\max\{w_1(\theta_j)\} = w_1(\theta_1)$.

For $r=d-1$, using the three-term recursion for $w_i$, we obtain
$\frac{w_{d-1}(\theta_j)}{w_d(\theta_j)} = \frac{\theta_j-a_d}{b_d}$.
Thus, we have
\begin{align*}
    w_{d-1}(\theta_j) = \begin{cases}
    \frac{\theta_j}{k}, &\text{if } j \text{ is even,}\\
   -\frac{1}{r-1}\cdot\frac{\theta_j}{k}, &\text{if } j \text{ is odd}.
    \end{cases}
\end{align*}
Thus, we have $\frac{1-w_{d-1}(\theta_j)}{1-w_1(\theta_j)}=1$ for $j$ even,
and $\frac{1-w_{d-1}(\theta_j)}{1-w_1(\theta_j)} = \frac{1+\frac{\theta_j}{k(r-1)}}{1-\frac{\theta_j}{k}}$
for $j$ odd.
\end{proof}

For $G$ an antipodal $r$-cover, then 
the expressions for $w_{d-1}(\theta_1)$ and $w_d(\theta_1)$ from the preceding proof imply that
\begin{align}
\frac{1-w_{d-1}(\theta_1)}{(d-1)^2} < \frac{1-w_d(\theta_1)}{d^2} \Longleftrightarrow \frac{\theta_1}{k} < \frac{d^2-2rd+r}{d^2}.
\label{eq:antipodal}
\end{align}

Notice that, if $d\leq 2r-1$, then $d^2-2rd+r <0$ but $\theta_1>0$.  Thus, we obtain the following important corollary.
\begin{corollary}
Let $G$ be an antipodal $r$-cover of diameter $d$ such that $d\geq 2r$.
Then $G$ is a counterexample of Conjecture \ref{FVconj} if and only if 
$\frac{\theta_1}{k} < \frac{d^2-2rd+r}{d^2}$.
\end{corollary}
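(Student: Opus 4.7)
The plan is to combine the $r = d-1$ instance of Theorem~\ref{thm:vallentin2} with the specialization of Lemma~\ref{lem:conj2_antipodal} to antipodal graphs, and then read off the equivalence from equation~\eqref{eq:antipodal}. By Lemma~\ref{lem:conj2_antipodal}, applied at both $r = d-1$ and $r = d$, the minimum over $j \in \{1, \ldots, d\}$ of $(1-w_1(\theta_j))/(1-w_r(\theta_j))$ is attained at $j = 1$. Consequently, Conjecture~\ref{FVconj} predicts
\[
  c_2(G)^2 = d^2 \, \frac{1-w_1(\theta_1)}{1-w_d(\theta_1)},
\]
while Theorem~\ref{thm:vallentin2} applied with $r = d-1$ yields the unconditional lower bound
\[
  c_2(G)^2 \geq (d-1)^2 \, \frac{1-w_1(\theta_1)}{1-w_{d-1}(\theta_1)}.
\]

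For the $(\Leftarrow)$ direction, observe that the hypothesis $\theta_1/k < (d^2 - 2rd + r)/d^2$ is, by equation~\eqref{eq:antipodal}, exactly the inequality $(1-w_{d-1}(\theta_1))/(d-1)^2 < (1-w_d(\theta_1))/d^2$. Cross-multiplying by the positive quantity $1-w_1(\theta_1)$ shows this is equivalent to the displayed Theorem~\ref{thm:vallentin2} lower bound strictly exceeding the value predicted by Conjecture~\ref{FVconj}, so $G$ is a counterexample.

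For the converse $(\Rightarrow)$, I would argue the contrapositive: assume $\theta_1/k \geq (d^2 - 2rd + r)/d^2$. The same chain of equivalences shows that the $r = d-1$ lower bound from Theorem~\ref{thm:vallentin2} no longer exceeds the $r = d$ prediction, so nothing in that bound refutes Conjecture~\ref{FVconj}. To actually conclude that Conjecture~\ref{FVconj} holds, I would need a matching upper bound on $c_2(G)^2$, for which the natural candidate is the embedding $\rho$ from \eqref{eq:rhodef} with $\theta = \theta_1$, whose squared distortion equals $\max_{1 \le s \le d} s^2 (1-w_1(\theta_1))/(1-w_s(\theta_1))$. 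The main obstacle is to show that under the given hypothesis this maximum is attained at $s = d$: by Corollary~\ref{HalfWay} the range may be cut down to $s \geq \lceil (d+1)/2 \rceil$, and the two endpoints $s = d-1$ and $s = d$ are handled by the assumption via equation~\eqref{eq:antipodal}; only the intermediate distances remain, and for these one may invoke the antipodal specialization of Conjecture~\ref{Conjecture1} or analyze the three-term cosine recurrence directly, using $d \ge 2r$ to keep $b_i, c_i$ and the sign pattern of the $w_i(\theta_1)$ under control.
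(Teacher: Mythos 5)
Your forward direction is exactly the paper's (implicit) argument: Lemma \ref{lem:conj2_antipodal} at distances $d-1$ and $d$ moves the minimum over $j$ to $j=1$, Theorem \ref{thm:vallentin2} with $r=d-1$ supplies the unconditional lower bound, and \eqref{eq:antipodal} translates the hypothesis on $\theta_1/k$ into the statement that this lower bound strictly exceeds the value predicted by Conjecture \ref{FVconj}; since $1-w_1(\theta_1)>0$, the cross-multiplication is legitimate and $G$ is a counterexample. That half is complete and matches the paper.

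The converse is where your proposal has a genuine gap, and the patch you suggest does not close it. To conclude that Conjecture \ref{FVconj} holds when $\theta_1/k \ge (d^2-2rd+r)/d^2$ you need the matching upper bound $c_2(G)^2 \le d^2(1-w_1(\theta_1))/(1-w_d(\theta_1))$, i.e.\ that $\min_{1\le s\le d}\,(1-w_s(\theta_1))/s^2$ is attained at $s=d$. Corollary \ref{HalfWay} only cuts the range down to $\lceil (d+1)/2\rceil \le s\le d$, and the hypothesis via \eqref{eq:antipodal} only settles the comparison between $s=d-1$ and $s=d$; for $d=4$ this already closes the argument (Lemma \ref{Diam3or4}), but the hypothesis $d\ge 2r$ allows $d\ge 5$ (e.g.\ the diameter $6$ and $7$ Golay coset graphs), and then the intermediate distances $\lceil (d+1)/2\rceil \le s\le d-2$ are not controlled by anything proved in the paper. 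Invoking ``the antipodal specialization of Conjecture \ref{Conjecture1}'' is circular, since Conjecture \ref{Conjecture1} is precisely what is unproven in this generality (the paper establishes it only for $d\le 4$, classical parameters with $b\ge 1$, and a few explicit families), and the alternative of analyzing the three-term recurrence directly is not carried out. In fairness, the paper states the corollary with no proof, as an immediate consequence of \eqref{eq:antipodal}; read literally, its ``only if'' direction likewise rests on the unproven claim that the most contracted pairs of $\rho$ occur at distance $d-1$ or $d$, which is verified case by case for the explicit counterexamples. So your $(\Leftarrow)$ argument is the paper's, while your $(\Rightarrow)$ sketch correctly identifies what is missing but does not supply it; a rigorous version would either prove $(1-w_s(\theta_1))/s^2 \ge (1-w_d(\theta_1))/d^2$ for all intermediate $s$ for the family at hand, or weaken the ``only if'' conclusion to the statement that \eqref{eq:antipodal} fails, i.e.\ that Theorem \ref{thm:vallentin2} gives no improvement over Theorem \ref{thm:vallentin}.
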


In the next four subsections, we describe four counterexamples to Conjecture \ref{Conjecture1}.

\subsection{Counterexamples}

\subsubsection{Hadamard graphs}

For $\mu$ even, let $G$ denote a Hadamard graph with intersection array $\{2\mu, 2\mu-1, \mu, 1; 1,\mu, 2\mu-1,2\mu\}$, see \cite[\S1.8]{BCN}. We obtain the following $W_{ij} = w_j(\theta_i)$:
\begin{align*}
    W = \begin{bmatrix}
    1 &1 &1 &1 &1\\
    1 &\frac{1}{\sqrt{2\mu}} &0 &-\frac{1}{\sqrt{2\mu}} &-1\\
    1 &0 &-\frac{1}{2\mu -1} &0 &1\\
    1 &-\frac{1}{\sqrt{2\mu}} &0 &\frac{1}{\sqrt{2\mu}} &-1\\
    1 &-1 &1 &-1 &1
    \end{bmatrix}.
\end{align*}
According to Theorem \ref{thm:vallentin}, we have
\begin{align*}
    c_2(G)^2 &\geq 4^2\cdot \min\left\{\frac{1-\frac{1}{\sqrt{2\mu}}}{1+1},\frac{1-0}{1-1},\frac{1+\frac{1}{\sqrt{2\mu}}}{1+1},\frac{1+1}{1-1}\right\} = 8\left(\frac{\sqrt{2\mu}-1}{\sqrt{2\mu}}\right).
\end{align*}
Theorem \ref{thm:vallentin2} actually shows that (for $r=d-1=3$)
\[
    c_2(G)^2 \geq 3^2 \min\left\{\frac{1-\frac{1}{\sqrt{2\mu}}}{1+\frac{1}{\sqrt{2\mu}}},\frac{1-0}{1-0},\frac{1+\frac{1}{\sqrt{2\mu}}}{1-\frac{1}{\sqrt{2\mu}}},\frac{1+1}{1+1}\right\}= 9 \left(\frac{\sqrt{2\mu}-1}{\sqrt{2\mu}+1}\right).
\]

When $\mu \geq 34$, $9\left(\frac{\sqrt{2\mu}-1}{\sqrt{2\mu}+1}\right) > 8\left(\frac{\sqrt{2\mu}-1}{\sqrt{2\mu}}\right)$.
Thus, any Hadamard graph with $\mu \geq 34$ is a counterexample to Conjecture \ref{FVconj}.  The embedding in \eqref{eq:rhodef} shows that $$c_2(G)^2 = 9\left(\frac{\sqrt{2\mu}-1}{\sqrt{2\mu}+1}\right).$$

\subsubsection{Coset graph of the shortened binary Golay code} \label{subsec:golay1}
The \emph{coset graph of the shortened binary Golay code} $G$ is distance-regular of diameter $6$ with intersection array $\{22,21,20,3,2,1;1,2,3,20,21,\allowbreak 22\}$, see \cite[\S11.3H]{BCN}. 
Theorem \ref{thm:vallentin} only shows $c_2(G)^2 \geq \frac{126}{11} \sim 11.45$,
while Theorem \ref{thm:vallentin2} together with the embedding in Section \ref{sec:opt_embdedding}
shows that $c_2(G)^2 = \frac{35}{3} = 11.\overline{6}$.

\subsubsection{Double coset graph of truncated Golay code} \label{subsec:golay2}
The \emph{double coset graph of truncated Golay code} $G$ is distance-regular of diameter $7$ with intersection array $\{22,21,20,16,6,2,1;1,2,6,16, 20,\allowbreak21,22\}$, see \cite[\S11.3F]{BCN}. 
Theorem \ref{thm:vallentin} only shows $c_2(G)^2 \geq \frac{147}{11} \sim 13.36$, while Theorem \ref{thm:vallentin2} together with the embedding in Section \ref{sec:opt_embdedding} shows that $c_2(G)^2 = \frac{27}{2} = 13.5$.

\subsubsection{Double coset graph of binary Golay code} \label{subsec:golay3}
The \emph{double coset graph of binary Golay code} $G$ is distance-regular of diameter $7$ with intersection array $\{23,22,21,20,3,2,1;1,2,3,20,\allowbreak 21,22,23\}$, see \cite[\S11.3E]{BCN}.  
Theorem \ref{thm:vallentin} only shows $c_2(G)^2 \geq \frac{343}{23} \sim 14.91$,
while Theorem \ref{thm:vallentin2} together with the embedding in Section \ref{sec:opt_embdedding}
shows that $c_2(G)^2 = \frac{63}{4} = 15.75$.

\subsection{Feasible Intersection Arrays for Counterexamples} \label{subsec:IAs}

There are further intersection arrays which are feasible and could provide
counterexamples if corresponding distance-regular graphs were to be found.
The fact that we cannot find any counterexamples to Conjecture \ref{Main_conjecture}
even among feasible intersection arrays is in our opinion strong evidence
that it is correct. 
Feasible intersection arrays for distance-regular graphs of bipartite, antipodal
of diameter 4 are so plentiful that we will not list them here.

Intersection arrays with diameter at least 4 which are antipodal, but not bipartite,
on at most $2048$ vertices:

{\medskip \footnotesize \noindent
\setlength{\tabcolsep}{5pt}
\begin{tabular}{@{\,}crrr@{~~~}l}
$d$ & $v$ & IA & $c_2(G)^2$ & comments \\
\hline
4 & 1104 & $\{76,75,6,1;1,6,75,76\}$ & $\sim{}7.14773$ &  \\
4 & 1600 & $\{85,84,5,1;1,5,84,85\}$ & $\sim{}7.23867$ &  \\
4 & 1568 & $\{116,115,10,1;1,10,115,116\}$ & $\sim{}7.47073$ &  \\
4 & 1232 & $\{135,128,18,1;1,18,128,135\}$ & $7.2$ & $G_{1,2}$ SRG \\
4 & 1850 & $\{154,150,15,1;1,15,150,154\}$ & $7.5$ &  \\
4 & 2000 & $\{243,224,36,1;1,36,224,243\}$ & $7.2$ &  \\
6 & 2048 & $\{22,21,20,3,2,1;1,2,3,20,21,22\}$ & $\frac{35}{3}$ & \S\ref{subsec:golay1} \\
\end{tabular}\par}

\medskip 
Intersection arrays with diameter at least 5 which are antipodal and bipartite
on at most $2048$ vertices for diameter 5 and on at most $65536$ vertices otherwise.

{\medskip \footnotesize \noindent
\setlength{\tabcolsep}{5pt}
\renewcommand*{\arraystretch}{1.25}
\begin{tabular}{@{\,}crrr@{~~~}l}
$d$ & $v$ & IA & $c_2(G)^2$ & comments \\
\hline
5 & 704 & $\{26,25,24,2,1;1,2,24,25,26\}$ & $10$ &  \\
5 & 420 & $\{33,32,27,6,1;1,6,27,32,33\}$ & $\frac{64}{7}$ &  \\
5 & 704 & $\{36,35,32,4,1;1,4,32,35,36\}$ & $\frac{112}{11}$ &  \\
5 & 1408 & $\{37,36,35,2,1;1,2,35,36,37\}$ & $\frac{120}{11}$ &  \\
5 & 532 & $\{45,44,36,9,1;1,9,36,44,45\}$ & $\frac{176}{19}$ &  \\
5 & 784 & $\{46,45,40,6,1;1,6,40,45,46\}$ & $\frac{72}{7}$ &  \\
5 & 1276 & $\{49,48,45,4,1;1,4,45,48,49\}$ & $\frac{320}{29}$ &  \\
5 & 1300 & $\{55,54,50,5,1;1,5,50,54,55\}$ & $\frac{144}{13}$ &  \\
5 & 648 & $\{57,56,45,12,1;1,12,45,56,57\}$ & $\frac{28}{3}$ & $Q$-pol. \\
5 & 1104 & $\{76,75,64,12,1;1,12,64,75,76\}$ & $\frac{240}{23}$ &  \\
5 & 1600 & $\{85,84,75,10,1;1,10,75,84,85\}$ & $11.2$ &  \\
5 & 1334 & $\{96,95,80,16,1;1,16,80,95,96\}$ & $\frac{304}{29}$ &  \\
5 & 1568 & $\{116,115,96,20,1;1,20,96,115,116\}$ & $\frac{368}{35}$ & $Q$-pol. \\
7 & 4114 & $\{16,15,15,14,2,1,1;1,1,2,14,15,15,16\}$ & $\frac{180}{11}$ &  \\
7 & 2048 & $\{22,21,20,16,6,2,1;1,2,6,16,20,21,22\}$ & $13.5$ & \S\ref{subsec:golay2} \\
7 & 4096 & $\{23,22,21,20,3,2,1;1,2,3,20,21,22,23\}$ & $15.75$ & \S\ref{subsec:golay3} \\
7 & 19140 & $\{105,104,100,75,30,5,1;1,5,30,75,100,104,105\}$ & $\frac{468}{29}$ &  \\
\end{tabular}\par}

\section{Small Diameter} \label{sec:small_d}

\subsection{Strongly Regular Graphs}

Let $G$ be a strongly regular graph (SRG) with $v$ vertices and eigenvalues $k > r > s$.
In \cite[Theorem 2.5]{FV} it is shown that $c_2(G)^2 = \frac{4(v-k-1)(k-r)}{k(v-k+r)}$.
We found $c_2(G)^2 = 4(1 + \frac{1}{s})$ which is equivalent.\footnote{Note that in \cite[\S4.3]{FV} 
the eigenvalues of strongly regular graphs are stated incorrectly. This has no consequence, 
as these formulas are not used in \cite{FV}.}

Note that the proof of Theorem \ref{thm:vallentin} in 
\cite{FV} implies that the optimal embedding comes from the orthogonal projection onto the eigenspace of $\theta_1 = r$ (as is implied by Conjecture \ref{Conjecture2}).
We verified this explicitly.

\subsection{Diameter 3 or 4}

\begin{lemma}\label{Diam3or4}
Let $G$ be a distance-regular graph of diameter $3$ or $4$. Then Conjecture \ref{Conjecture1} holds true for $G$.
\end{lemma}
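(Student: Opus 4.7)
The plan is to prove the two assertions of Conjecture \ref{Conjecture1} in turn. The first equality,
\[
\min_{1\leq r\leq d}\left\{\frac{1-w_r(\theta_1)}{r^2}\right\} = \min_{r\in\{d-1,d\}}\left\{\frac{1-w_r(\theta_1)}{r^2}\right\},
\]
follows immediately from Corollary \ref{HalfWay}, since for $d\in\{3,4\}$ one has $\lceil(d+1)/2\rceil = d-1$, so the index set in that corollary is exactly $\{d-1,d\}$.

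For the second assertion we must show that, whenever $G$ is not antipodal, the minimum on the right-hand side is attained at $r=d$, i.e. $(d-1)^2(1-w_d(\theta_1)) \leq d^2(1-w_{d-1}(\theta_1))$. Setting $\epsilon := k - \theta_1 > 0$, the three-term recurrence at $r = d$ combined with $a_d = k - c_d$ gives the identity
\[
w_{d-1}(\theta_1) = \frac{c_d - \epsilon}{c_d}\, w_d(\theta_1),
\]
while orthogonality between the $\theta_0$- and $\theta_1$-eigenspaces yields $\sum_{r=0}^d k_r w_r(\theta_1) = 0$. My strategy is to combine these two ingredients with the sign-change information for $\theta_1$ (the cosine sequence of the second-largest eigenvalue has exactly one sign change, which in particular forces $w_d(\theta_1) \leq 0$) to reduce the desired inequality to a polynomial inequality in the intersection numbers, and then to verify that inequality using the non-antipodal hypothesis $c_d < k$.

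For $d = 3$, the analysis splits on the sign of $w_2(\theta_1)$. If $w_2(\theta_1) \leq 0$, the rearranged inequality $9 w_2 - 4 w_3 \leq 5$ follows immediately from $|w_3|\leq 1$. If instead $w_2(\theta_1) > 0$, the recurrence forces $w_3 < 0$ and $c_3 < \epsilon$, and orthogonality then yields the closed form
\[
|w_3(\theta_1)| \;=\; \frac{(1+\theta_1)\, c_2\, c_3}{k\, b_1\,(b_2 + c_3 - \epsilon)}.
\]
Substituting, the inequality $(1-w_3(\theta_1))/9 \leq (1-w_2(\theta_1))/4$ reduces to the polynomial bound
\[
c_2(1+\theta_1)\bigl(9\epsilon - 5 c_3\bigr) \;\leq\; 5\, k\, b_1\,\bigl(b_2 + c_3 - \epsilon\bigr),
\]
which I would verify by expanding both sides and applying the positivity constraints on intersection numbers together with $c_3 < k$.

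For $d = 4$ the same philosophy applies, but the case analysis is more intricate: the $\theta_1$-cosine sequence admits one of three sign patterns for $(w_1,w_2,w_3,w_4)$, namely $(+,+,+,-)$, $(+,+,-,-)$, or $(+,-,-,-)$. In each case the recurrence at $r = 4$ gives $w_3 = (c_4 - \epsilon)w_4/c_4$, and orthogonality (after eliminating $w_3$ via the recurrence at $r=3$) produces an explicit expression for $|w_4(\theta_1)|$ in terms of $w_2(\theta_1)$ and the intersection numbers. The inequality $16 w_3 - 9 w_4 \leq 7$ then reduces, in each sign regime, to a polynomial inequality in the intersection parameters. The main obstacle is handling these three sub-cases uniformly; the non-antipodal hypothesis $c_4 < k$ is essential here, because without it the Hadamard graphs from Section \ref{Counterexamples} provide genuine counterexamples.
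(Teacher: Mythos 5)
Your first paragraph is exactly the paper's proof: for $d=3,4$ one has $\lceil (d+1)/2\rceil = d-1$, so Corollary \ref{HalfWay} already restricts the minimum to $r\in\{d-1,d\}$, and that one observation is the entire argument the authors give. Everything after that in your proposal concerns the second sentence of Conjecture \ref{Conjecture1} (``the minimum occurs at $r=d$ unless $G$ is antipodal''), which the paper's proof of this lemma does not address; downstream the authors only ever use the displayed equality (Theorem \ref{diam3_Distortion} states $c_2(G)^2$ as a maximum over $r=d-1,d$ without deciding which term wins, and the location of the minimum is settled only for antipodal graphs, via \eqref{eq:antipodal}).

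Your attempt at that second clause is a program rather than a proof, and it has genuine gaps. The identities $w_{d-1}(\theta_1)=\tfrac{c_d-\epsilon}{c_d}w_d(\theta_1)$ and $\sum_r k_r w_r(\theta_1)=0$ are correct, but for $d=3$ the argument bottoms out in the inequality $c_2(1+\theta_1)(9\epsilon-5c_3)\le 5kb_1(b_2+c_3-\epsilon)$, which you leave unverified; note also that for $d=3$ the minimum sits at $r=3$ even for antipodal graphs (by \eqref{eq:antipodal}, $d^2-2rd+r=9-5r<0$ for every antipodal $r$-cover with $r\ge 2$), so the fact that your reduction appears to need non-antipodality ($c_3<k$) suggests it is not isolating the right mechanism. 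For $d=4$ you explicitly leave the three sign-pattern sub-cases unresolved, and that is precisely where the content lies: the Hadamard graphs show the desired inequality genuinely fails for certain antipodal double covers, so a correct argument must pinpoint exactly where non-antipodality enters, which your sketch does not do. If the lemma is read as asserting only the displayed equality of Conjecture \ref{Conjecture1} --- which is all the paper proves here and all it later uses --- then your first paragraph suffices and the rest should be cut.
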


\begin{proof}
For $d = 3, 4$ we have $\lceil\frac{d+1}{2} \rceil = 2,3$ respectively. By using Corollary \ref{HalfWay}, we are done. 
\end{proof}

\begin{lemma}\label{Diam3_Conj2}
Let $G$ be a distance-regular graph of diameter $3$. Then Conjecture \ref{Conjecture2} holds true for $G$.
\end{lemma}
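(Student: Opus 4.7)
The plan is to verify Conjecture \ref{Conjecture2} separately for $r=2$ and $r=3$ by deriving explicit polynomial expressions for $F_r(\theta):=(1-w_r(\theta))/(1-w_1(\theta))$ and determining where its maximum over $\{\theta_1,\theta_2,\theta_3\}$ is attained. For $r=2$, applying the three-term recurrence together with $w_1(\theta)=\theta/k$ and $a_1+b_1+c_1=k$, a short computation yields
\[
    F_2(\theta) = \frac{\theta+b_1+1}{b_1},
\]
which is strictly increasing and linear in $\theta$, and hence maximized over $\{\theta_1,\theta_2,\theta_3\}$ at $\theta_1$.

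For $r=3$, applying the recurrence twice and simplifying via the identity $\sum_{r=0}^{d}k_r(1-w_r(\theta_j))=n$ (valid for $j\geq 1$), one finds $F_3(\theta)=q(\theta)/(b_1 b_2)$, where
\[
    q(\theta) = \theta^2+(b_1+b_2+c_2+1-k)\theta+(b_1 b_2+b_2+c_2-k).
\]
Since $q$ is a convex quadratic, its maximum over $\{\theta_1,\theta_2,\theta_3\}$ is attained at $\theta_1$ or $\theta_3$. A short manipulation using the Vieta identity $\theta_1+\theta_2+\theta_3=a_1+a_2+a_3-k$ (the trace of the intersection matrix minus $k$) shows that $q(\theta_1)-q(\theta_3)=(\theta_1-\theta_3)(a_3-\theta_2)$, so the desired inequality $F_3(\theta_1)\geq F_3(\theta_3)$ reduces to proving $a_3\geq\theta_2$.

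The main obstacle is this inequality $a_3\geq\theta_2$. The plan is to derive the polynomial identity
\[
    p(a_3) = b_2 c_2\, v_2(a_3),
\]
where $p(\theta)=(\theta-\theta_1)(\theta-\theta_2)(\theta-\theta_3)$ and $m(\theta)=(\theta-k)p(\theta)$ is the minimal polynomial of the adjacency matrix. This follows by evaluating the standard identity $m(\theta)=c_2 c_3\bigl[(\theta-a_3)v_3(\theta)-b_2 v_2(\theta)\bigr]$ at $\theta=a_3$ and dividing by $a_3-k=-c_3$. Assume for contradiction that $a_3<\theta_2$. Cauchy interlacing for the $1\times 1$ principal submatrix $(a_3)$ of the symmetrized intersection matrix $\widetilde T$ gives $a_3\geq\theta_3$, so $a_3\in[\theta_3,\theta_2)$, an interval on which $p>0$. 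Thus $v_2(a_3)>0$; since the roots $r_-<r_+$ of $v_2$ satisfy $r_-<0\leq a_3$, this forces $a_3>r_+$. But $r_+$ is the larger eigenvalue of the $2\times 2$ top-left principal submatrix of $\widetilde T$, and Cauchy interlacing for a $2\times 2$ principal submatrix of a $4\times 4$ symmetric matrix gives $r_+\geq\theta_2$, yielding the contradiction $\theta_2\leq r_+<a_3$.
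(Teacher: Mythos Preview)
Your proof is correct. Both your argument and the paper's handle $r=2$ identically (the ratio is linear in $\theta_j$) and both reduce the $r=3$ case to the inequality $a_3\geq\theta_2$ via the trace identity $\theta_0+\theta_1+\theta_2+\theta_3=a_1+a_2+a_3$; the difference lies in how that inequality is established. The paper invokes the $3\times 3$ matrix $T$ from \cite[\S4.1B]{BCN}, whose eigenvalues are $\theta_1,\theta_2,\theta_3$, and interlaces with its diagonal $2\times 2$ principal submatrix $\mathrm{diag}(-c_1,\,k-b_2-c_3)$ to obtain $\theta_2\leq\max(-1,\,a_3-b_2)\leq a_3$ in one step. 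Your route instead combines the polynomial identity $p(a_3)=b_2c_2\,v_2(a_3)$ with two separate interlacing applications on the full $4\times 4$ symmetrized intersection matrix (the $1\times1$ block $(a_3)$ and the top-left $2\times2$ block), together with a sign analysis of $p$ and $v_2$. The paper's argument is shorter and relies on a known structural fact about $T$; yours is a bit longer but entirely self-contained, needing only the recurrence and Cauchy interlacing. One minor wording issue: you say $p>0$ on $[\theta_3,\theta_2)$, but $p(\theta_3)=0$; the boundary case $a_3=\theta_3$ still yields a contradiction (it forces $a_3=r_+\geq\theta_2>\theta_3$), so the logic survives.
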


\begin{proof}
By using the three term recursion of $w_r(\theta)$ we obtain that
\begin{align*}
    \frac{1-w_2(\theta_j)}{1-w_1(\theta_j)} &= \frac{\theta_0 + \theta_j -a_1}{b_1},
\end{align*}
and
\begin{align*}
    \frac{1-w_3(\theta_j)}{1-w_1(\theta_j)} &= \frac{\theta_j^2 -(a_1+a_2-\theta_0)\theta_j + a_1a_2 -b_1c_2 - b_0c_1 - (a_1+a_2-\theta_0)\theta_0}{b_1b_2}.
\end{align*}
Since $\theta_1 > \theta_2 > \theta_3$, thus
$
   \max_{1\leq j \leq 3}\left\{\frac{1-w_2(\theta_j)}{1-w_1(\theta_j)}\right\} = \frac{1-w_2(\theta_1)}{1-w_1(\theta_1)}.
$
Moreover, to show that
$
   \max_{1\leq j \leq 3}\left\{\frac{1-w_3(\theta_j)}{1-w_1(\theta_j)}\right\} = \frac{1-w_3(\theta_1)}{1-w_1(\theta_1)}
$
is equivalent to show that $\theta_0+\theta_1+\theta_3 - (a_1 + a_2) \geq 0$. That is equivalent to show that $a_3 - \theta_2 \geq 0$, because $\theta_0 + \theta_1 + \theta_2 + \theta_3 = a_1  + a_2 + a_3$. Consider the matrix
\begin{align*}
    T = \begin{bmatrix}
    -c_1& b_1& 0\\
    c_1& k-b_1-c_2& b_2\\
    0& c_2& k-b_2-c_3\\ 
    \end{bmatrix}.
\end{align*}
The eigenvalue of $T$ are $\theta_1,\theta_2,\theta_3$ (see \cite[\textsection 4.1.B]{BCN}). As, $\begin{bmatrix}
-c_1& 0\\
0& k-b_2-c_3\\
\end{bmatrix}$ is a principal submatrix of $T$, so by the interlacing of the eigenvalues we have that $\theta_2 \leq k-b_2-c_3 = a_3-b_2$, or $\theta_2 \leq -c_1 = -1$. In either case, $a_3 - \theta_2 \geq 0$.
\end{proof}

\begin{theorem}\label{diam3_Distortion}
Let $G$ be a distance-regular graph of diameter 3 then Conjecture \ref{Main_conjecture} holds true for $G$. That is, 
\begin{align*}
    c_2(G)^2 &= \max\left\{4\frac{1-w_1(\theta_1)}{1-w_2(\theta_1)},9\frac{1-w_1(\theta_1)}{1-w_3(\theta_1)}\right\}\\ 
    &= \max\left\{\frac{4b_1}{\theta_0+\theta_1-a_1}, \frac{9b_1b_2}{(\theta_0+\theta_1-a_1)(\theta_0+\theta_1-a_2)-\theta_0\theta_1-b_1c_2-\theta_0}\right\}
\end{align*}
and further if $G$ is an antipodal $r$-cover, then 
\begin{align*}
    c_2(G)^2 = 9\frac{1-w_1(\theta_1)}{1-w_3(\theta_1)}
    &= \frac{9b_1b_2}{(\theta_0+\theta_1-a_1)(\theta_0+\theta_1-a_2)-\theta_0\theta_1-b_1c_2-\theta_0}.
\end{align*}
\end{theorem}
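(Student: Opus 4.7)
The plan is to establish matching upper and lower bounds on $c_2(G)^2$ and then rewrite the resulting expression in the closed form stated in the theorem, using the three-term recursion for the cosine sequence.

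For the upper bound, I would use the faithful embedding $\rho$ from \eqref{eq:rhodef} with $\theta = \theta_1$. The analysis in Section \ref{sec:opt_embdedding} already shows that this embedding has expansion $1$ and
$$\text{distortion}(\rho)^2 = \max_{1 \leq r \leq 3} \frac{r^2(1-w_1(\theta_1))}{1-w_r(\theta_1)}.$$
Since $\lceil(d+1)/2\rceil = 2$ when $d=3$, Corollary \ref{HalfWay} reduces this maximum to $r \in \{2,3\}$, giving the desired upper bound $c_2(G)^2 \leq \max\{4(1-w_1(\theta_1))/(1-w_2(\theta_1)),\,9(1-w_1(\theta_1))/(1-w_3(\theta_1))\}$.

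For the matching lower bound I would apply Theorem \ref{thm:vallentin2}, which immediately yields
$$c_2(G)^2 \geq \max_{r \in \{2,3\}} r^2 \min_{1 \leq j \leq 3}\frac{1-w_1(\theta_j)}{1-w_r(\theta_j)}.$$
Lemma \ref{Diam3_Conj2} (Conjecture \ref{Conjecture2} for diameter $3$) pins down the inner minimum: for each $r \in \{2,3\}$ it is attained at $j=1$. Combining the two bounds gives the first displayed equality in the theorem, and the second equality is obtained by substituting the explicit expressions for $(1-w_2(\theta_j))/(1-w_1(\theta_j))$ and $(1-w_3(\theta_j))/(1-w_1(\theta_j))$ computed in the proof of Lemma \ref{Diam3_Conj2} (using $c_1=1$ and $b_0 = k = \theta_0$, so that $b_0c_1 = \theta_0$).

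For the antipodal refinement I would invoke the equivalence \eqref{eq:antipodal}: for an antipodal $r$-cover of diameter $d$, the inequality $(1-w_{d-1}(\theta_1))/(d-1)^2 < (1-w_d(\theta_1))/d^2$ holds iff $\theta_1/k < (d^2-2rd+r)/d^2$. For $d=3$ and any nontrivial cover ($r \geq 2$), the right-hand side equals $(9-5r)/9 \leq -1/9 < 0$, whereas $\theta_1/k > 0$, so the strict inequality fails. Consequently $9/(1-w_3(\theta_1)) \geq 4/(1-w_2(\theta_1))$, the maximum between the $r=2$ and $r=3$ terms is attained at $r=3$, and a final substitution yields the closed form stated in the antipodal case.

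The main technical obstacle is essentially bookkeeping: checking that the $1 - w_r(\theta_1)$ for $r \geq 1$ are strictly positive so the inequalities preserve direction (a standard consequence of $|w_r(\theta_j)| \leq 1$, with equality only in degenerate cases), and manipulating the recursion correctly to match the two forms of the formula. Every conceptual ingredient (the embedding $\rho$, the lower bound from Theorem \ref{thm:vallentin2}, the identification of the minimizer in Lemma \ref{Diam3_Conj2}, and the antipodal dichotomy \eqref{eq:antipodal}) is already in place, so the argument is mostly assembly once the direction of each inequality is pinned down.
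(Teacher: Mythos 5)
Your proposal is correct and follows essentially the same route as the paper: the paper's one-line proof invokes Lemma \ref{Diam3or4} (i.e., Corollary \ref{HalfWay}) for the upper bound via the embedding $\rho$, Lemma \ref{Diam3_Conj2} for the matching lower bound through Theorem \ref{thm:vallentin2}, and the antipodal dichotomy \eqref{eq:antipodal} with $9-5r<0$ for the second assertion — exactly the assembly you describe, including the substitution $b_0c_1=\theta_0$ for the closed form.
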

\begin{proof}
First assertion follows from Lemma \ref{Diam3or4} and Lemma \ref{Diam3_Conj2}. The second one follows since for any $r\geq 2$ we have $d^2-2rd+r = 9-5r < 0$ but $\theta_1 > 0$ and can use \eqref{eq:antipodal}. 
\end{proof}

\subsubsection{Taylor Graphs}

A distance-regular graph with intersection array 
$\{k,\mu,1;1,\mu,k\}$
is called a \emph{Taylor graph} \cite[\S1.5]{BCN}.
The eigenvalues of the Taylor graph are
$\theta_0 = k > \theta_1 = z_1> \theta_2 = -1> \theta_3 = z_2$, where $z_1$ and $z_2$ are the roots of $z^2-(k-1-2\mu)z-k = 0$, i.e.,
\begin{align*}
    z_1,z_2 &= \frac{(k-1-2\mu) \pm \sqrt{(k-1-2\mu)^2+4k}}{2}.
\end{align*}
For the matrix $W$, $W_{ij} = w_j(\theta_i)$, we obtain
 \begin{align*}
    W &= \begin{bmatrix}
    1& 1& 1& 1\\
    1& z_1/k& -z_1/k& -1\\
    1& -1/k& -1/k& 1\\
    1& z_2/k& -z_2/k& -1\\
    \end{bmatrix}.
\end{align*}
By using  Theorem \ref{diam3_Distortion}, we have that
\begin{align*}
    c_2(G)^2 = 3^2 \frac{k-z_1}{2k}. 
\end{align*}

\subsubsection{Generalized Polygons}

Let us consider the {\em point graph} $G$ of 
a generalized polygon of order $(s, t)$, see \cite[\S6.5]{BCN}
for details. In the following we generalize the results 
for classical generalized polygons with $s=t$ in \cite{Kobayashi2015}.
A generalized polygon with $s>1$ and $t>1$ whose point graphs is neither complete nor strongly regular,
is either a generalized hexagon with $s \leq t^3$ and $t \leq s^3$ or a generalized octagon 
with $s \leq t^2$ and $t \leq s^2$.
Note that in both cases Lemma \ref{Diam3or4} shows that Conjecture \ref{Conjecture1} is true for these graphs. In addition, a straightforward calculation implies that the minimum in Conjecture \ref{Conjecture1} is attained for $r=d$. 
Thus, to compute the least distortion we only need to prove Conjecture \ref{Conjecture2} for $r=d$. 

For $G$ the point graph of a generalized hexagon, 
we obtain the following $W_{ij} = w_j(\theta_i)$:
\[
    W = \begin{bmatrix}
    1 &1 &1 &1\\
    1 & \frac{s-1+\sqrt{st}}{s(t+1)} & \frac{-s+(s-1)\sqrt{st}}{s^2t(t+1)} & -1/st\sqrt{st} \\
    1 & \frac{s-1-\sqrt{st}}{s(t+1)} & \frac{-s-(s-1)\sqrt{st}}{s^2t(t+1)} & 1/st\sqrt{st}\\
    1 & -1/s & 1/s^2 & -1/s^3
    \end{bmatrix}.
\]

Note that $w_3(\theta_2) > 0$,
so we only need to compare $\frac{1-w_3(\theta_j)}{1-w_1(\theta_j)}$
for $j=1,3$ to show Conjecture \ref{Conjecture2} for $r=3$.
This is easily done, since $\frac{1-w_3(\theta_1)}{1-w_1(\theta_1)} > \frac{1-w_3(\theta_3)}{1-w_1(\theta_3)}$
$\Longleftrightarrow$ $(s^3+s^2)(t+1) + st^2(s^3t^2+1) + t^{3/2}(t(s^{7/2}-s^{3/2})+(s^{9/2}-s^{1/2})) >0.
$
Hence,
\begin{align*}
 c_2(G)^2 = 3^2 \frac{t\sqrt{st}}{(\sqrt{st}+1)(t+1)}.
\end{align*}

For $G$ the point graph of a generalized octagon,
we obtain
\[
    W = \begin{bmatrix}
    1 &1 &1 &1 & 1\\
    1 & \frac{s-1+\sqrt{2st}}{s(t+1)} & \frac{s(t-1)+(s-1)\sqrt{2st}}{s^2t(t+1)} & \frac{(s-1)st-s\sqrt{2st}}{s^3t^2(t+1)} & -1/s^2t^2 \\
    1 & (s-1)/s(t+1)  & -1/st  & -(s-1)/s^2t(t+1) & 1/s^2t^2\\
    1 & \frac{s-1-\sqrt{2st}}{s(t+1)}  & \frac{s(t-1)-(s-1)\sqrt{2st}}{s^2t(t+1)} & \frac{(s-1)st+s\sqrt{2st}}{s^3t^2(t+1)} & -1/s^2t^2\\
    1 & -1/s  & 1/s^2 & -1/s^3 & 1/s^4\\
    \end{bmatrix}.
\]

Note that $w_4(\theta_2) > 0$, $w_4(\theta_4)>0$, and $w_4(\theta_1) = w_4(\theta_3)$. Thus, Conjecture \ref{Conjecture2} holds true for $r=4$. Hence,
\begin{align*}
 c_2(G)^2 = 4^2 \frac{st^2(st-\sqrt{2st}+1)}{(t+1)(s^2t^2+1)}.
\end{align*}

\section{Graphs with Classical Parameters}\label{sec:classical}

Given an integer $b\neq 0,-1$ and two integers $m$ and $n$, define
\begin{align*}
    \gauss{n}{m} = \gauss{n}{m}_b = \begin{cases}
    0 &\text{if } m<0,\\
    \displaystyle{\binom{n}{m}} &\text{if }b=1,\\
    \displaystyle{\prod_{h=0}^{m-1}\frac{b^{n-h}-1}{b^{m-h}-1}} &\text{otherwise}.
    \end{cases}
\end{align*}

Let $d$ be a natural number. By the graphs \emph{with classical parameters} $(d, b, \allowbreak \alpha, \beta)$, we mean the distance-regular graphs with intersection numbers $b_i = (\gauss{d}{1}-\gauss{i}{1})(\beta - \alpha \gauss{i}{1})$ 
and $c_i = \gauss{i}{1}(1+\alpha\gauss{i-1}{1})$ ($0\leq i \leq d$) 
(see \cite[\textsection 6.1]{BCN}). It follows that $k = b_0= \beta\gauss{d}{1}$ 
and $a_i = k-b_i-c_i= \gauss{i}{1}(\beta - 1 + \alpha(\gauss{d}{1}-\gauss{i}{1}-\gauss{i-1}{1}))$. 
The eigenvalues of graphs with classical parameters are 
$\theta_i = \gauss{d-i}{1}(\beta - \alpha\gauss{i}{1}) - \gauss{i}{1} 
= b^{-i}b_i - \gauss{i}{1}$, $0\leq i \leq d$, (see \cite[Corollary 8.4.2]{BCN}). 
The following is a proof of Conjecture \ref{Conjecture1} for all 
distance-regular graphs of classical parameter with $b\geq 1$. 

\begin{theorem}\label{Thm_clas}
Let $G$ be a distance-regular graphs of classical parameter with $b\geq 1$ of diameter $d$. Let $w_0(\theta_1),w_1(\theta_1),\ldots,w_d(\theta_1)$ denote the cosine sequence of $\theta_1$. Then
\[
    \min_{1\leq r\leq d}\left\{ \frac{1-w_r(\theta_1)}{r^2}\right\} =  \frac{1-w_d(\theta_1)}{d^2}.
\]
\end{theorem}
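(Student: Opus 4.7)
The plan is to derive a workable closed form for the cosine sequence $w_r(\theta_1)$ of a classical-parameter graph with $b \geq 1$ and then use it to establish monotonicity of the function $r \mapsto (1-w_r(\theta_1))/r^2$ on $\{1,\ldots,d\}$.

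First, I would substitute the classical-parameter values
\[
 c_r = \gauss{r}{1}_b\bigl(1+\alpha\gauss{r-1}{1}_b\bigr), \quad
 b_r = b^r\gauss{d-r}{1}_b\bigl(\beta-\alpha\gauss{r}{1}_b\bigr), \quad
 a_r = k - b_r - c_r,
\]
together with $\theta_1 = \gauss{d-1}{1}_b(\beta-\alpha) - 1$ and $k = \theta_0 = \beta\gauss{d}{1}_b$, into the three-term recurrence from Section \ref{sec:opt_embdedding}. Using the identity $\gauss{d}{1}_b - \gauss{r}{1}_b = b^r\gauss{d-r}{1}_b$ together with standard $q$-binomial manipulations, I expect the recurrence to yield a closed-form expression for $1-w_r(\theta_1)$ as a rational function of $r$, $b$, $\alpha$, $\beta$. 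As sanity checks: for Hamming graphs $H(d,q)$ with parameters $(d,1,0,q-1)$, the Krawtchouk polynomial computation gives $1-w_r(\theta_1) = qr/(d(q-1))$, and the leading $\beta$-asymptotics must be consistent with Theorem \ref{thm:beta_suff_large}, i.e. $(1-w_1(\theta_1))/(1-w_d(\theta_1)) \to b^{d-1}/\gauss{d}{1}_b$.

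Second, with the closed form in hand, I would verify the monotonicity inequality
\[
 \frac{1-w_{r+1}(\theta_1)}{(r+1)^2} \leq \frac{1-w_r(\theta_1)}{r^2} \qquad (1 \leq r \leq d-1),
\]
which is equivalent to $(1-w_{r+1}(\theta_1))/(1-w_r(\theta_1)) \leq ((r+1)/r)^2$. By Corollary \ref{HalfWay} it suffices to verify this for $r \geq \lceil (d+1)/2 \rceil$, which may simplify the algebra. Iterating the inequality yields that the minimum of $(1-w_r(\theta_1))/r^2$ over $\{1,\ldots,d\}$ is attained at $r=d$.

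The main obstacle is the first step: explicitly solving the three-term recurrence for classical parameters and rewriting the result in a form amenable to monotonicity analysis. The resulting $q$-analog expressions can be verbose and require careful bookkeeping, particularly the tracking of the $(\beta - \alpha\gauss{i}{1}_b)$ and $(1+\alpha\gauss{i-1}{1}_b)$ factors. A secondary difficulty is handling the full parameter range: $\alpha$ is not sign-constrained, and one must ensure the polynomial inequality in step 2 holds throughout the region of $(\alpha,\beta)$ for which a distance-regular graph with classical parameters $(d,b,\alpha,\beta)$ and $b \geq 1$ actually exists.
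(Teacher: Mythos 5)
Your overall strategy---find a closed form for $1-w_r(\theta_1)$ and then check monotonicity of $(1-w_r(\theta_1))/r^2$---is the same as the paper's, but the decisive step is left unexecuted, and what you anticipate the closed form to look like is off in a way that matters. The paper does not solve the three-term recurrence from scratch: it invokes the known structural fact (\cite[Proposition 4.1.8 and Corollary 8.4.2]{BCN}; for $b=1$, the proof of \cite[Theorem 6.1.1]{BCN}) that the cosine sequence of the second largest eigenvalue $\theta_1$ of a graph with classical parameters is \emph{affine} in $\sigma_r=\gauss{d-r}{1}_b$, i.e.\ $w_r(\theta_1)=u\gauss{d-r}{1}_b+v$. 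Fitting $u,v$ from $w_0(\theta_1)=1$ and $w_1(\theta_1)=\theta_1/k$ gives $1-w_r(\theta_1)=\frac{b(1-b^{-r})}{b-1}\bigl(1-\tfrac{\theta_1}{k}\bigr)$ for $b>1$ and $1-w_r(\theta_1)=r\bigl(1-\tfrac{\theta_1}{k}\bigr)$ for $b=1$. The entire dependence on $\alpha$ and $\beta$ is absorbed into the positive scalar $1-\theta_1/k$, which is independent of $r$, so the minimization reduces to the elementary observation that $(1-b^{-r})/r^2$ (respectively $1/r$) is decreasing in $r$; no analysis of the sign of $\alpha$ or of the region of feasible $(\alpha,\beta)$ is needed.

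In your proposal neither the closed form nor the monotonicity inequality is actually derived: you explicitly defer the ``main obstacle,'' and you expect the answer to be a rational function of $r$, $b$, $\alpha$, $\beta$ whose monotonicity would then have to be verified over the (not explicitly known) existence region of the parameters---a step for which you give no method. That is the genuine gap: without the affine-cosine-sequence fact (or an equivalent explicit solution of the recurrence showing that the $r$-dependence is proportional to $1-b^{-r}$, with $\alpha,\beta$ entering only through a positive $r$-independent factor), your step 2 is not a routine polynomial check and the argument is incomplete as written. Your sanity checks (Hamming graphs, and the ratio $(1-w_1(\theta_1))/(1-w_d(\theta_1))=b^{d-1}/\gauss{d}{1}_b$, which in fact holds exactly rather than only asymptotically) are consistent with the correct formula, and the appeal to Corollary \ref{HalfWay} is harmless, but it becomes unnecessary once the closed form is in hand.
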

\begin{proof}
Since $b\geq 1$, thus $\theta_0 > \theta_1 > \ldots > \theta_d$. Suppose $b=1$. Then the proof of \cite[Theorem 6.1.1]{BCN} implies that
\begin{align}\label{eqnb=1}
    1-w_r(\theta_1) =r\left(1-\frac{\theta_1}{k}\right).
\end{align}
Thus, 
\begin{align*}
\min_{1\leq r\leq d}\left\{ \frac{1-w_r(\theta_1)}{r^2}\right\} = \frac{1-w_d(\theta_1)}{d^2}.
\end{align*}
Suppose $b> 1$. According to \cite[Proposition 4.1.8 and Corollary 8.4.2]{BCN} we have 
\begin{equation*}
    w_r(\theta_1) = u \sigma_r + v = u \gauss{d-r}{1}_b +v,
\end{equation*}
for some numbers $u \neq 0$ and $v$. By using $w_0(\theta_1) = 1$ and $w_1(\theta_1) = \frac{\theta_1}{k}$ we solve for $u$ and $v$ and we obtained
\begin{align}\label{eqnb>1}
    {1-w_r} = \frac{b(1-b^{-r})}{(b-1)} \left(1-\frac{\theta_1}{k}\right).
\end{align}
Thus, the assertion follows.
\end{proof}

\begin{corollary}\label{Cor_class_b>1}
Let $G$ be a distance-regular graphs of classical parameter with $b\geq 1$. Then
\begin{align*}
    d^2 \min_{j \in \{1,\ldots,d\}}\left\{\frac{1-w_1(\theta_j )}{1-w_d(\theta_j)}\right\}\leq c_2(G)^2 \leq d^2\left\{\frac{1-w_1(\theta_1)}{1-w_d(\theta_1)}\right\} = d^2\frac{b^{d-1}}{\gauss{d}{1}_b}.
\end{align*}
\end{corollary}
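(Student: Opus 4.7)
The plan is to establish the two inequalities separately, and then reduce the right-hand side to the closed form $d^2\,b^{d-1}/\gauss{d}{1}_b$ by a direct computation using the cosine identities \eqref{eqnb=1} and \eqref{eqnb>1} that were just derived in the proof of Theorem \ref{Thm_clas}.

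The left inequality is immediate from Vallentin's bound, Theorem \ref{thm:vallentin}, which gives
\[
c_2(G)^2 \geq d^2\min_{j\in\{1,\ldots,d\}}\left\{\frac{1-w_1(\theta_j)}{1-w_d(\theta_j)}\right\}
\]
for any distance-regular graph $G$ of diameter $d$, classical parameters or not.

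For the right inequality I would invoke the explicit embedding $\rho$ from Section \ref{sec:opt_embdedding} with $\theta=\theta_1$. As computed there, $\rho$ has expansion $1$ and therefore
\[
c_2(G)^2 \leq \mathrm{distortion}(\rho)^2 = \max_{1\leq r\leq d}\left\{\frac{r^2(1-w_1(\theta_1))}{1-w_r(\theta_1)}\right\}.
\]
Now Theorem \ref{Thm_clas} (which requires only $b\geq 1$) asserts precisely that the minimum of $(1-w_r(\theta_1))/r^2$ over $1\leq r\leq d$ is attained at $r=d$, equivalently the maximum above is attained at $r=d$. Hence
\[
c_2(G)^2 \leq d^2\,\frac{1-w_1(\theta_1)}{1-w_d(\theta_1)}.
\]

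It remains to simplify $(1-w_1(\theta_1))/(1-w_d(\theta_1))$ to $b^{d-1}/\gauss{d}{1}_b$. For $b>1$ I would plug $r=1$ and $r=d$ into the identity \eqref{eqnb>1} obtained in the proof of Theorem \ref{Thm_clas}; the common factor $\tfrac{b}{b-1}(1-\theta_1/k)$ cancels, leaving
\[
\frac{1-w_1(\theta_1)}{1-w_d(\theta_1)} = \frac{1-b^{-1}}{1-b^{-d}} = \frac{b^{d-1}(b-1)}{b^d-1} = \frac{b^{d-1}}{\gauss{d}{1}_b}.
\]
For $b=1$, I would instead use \eqref{eqnb=1}, giving the ratio $1/d = b^{d-1}/\gauss{d}{1}_1$. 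Since this is entirely a mechanical computation from equations proved one paragraph earlier, there is essentially no obstacle; the real content of the corollary is the combination of Theorem \ref{thm:vallentin}, the embedding $\rho$, and the minimization result Theorem \ref{Thm_clas}.
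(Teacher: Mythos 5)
Your proposal is correct and follows the paper's own argument essentially verbatim: Theorem \ref{thm:vallentin} for the lower bound, the embedding $\rho$ together with Theorem \ref{Thm_clas} for the upper bound, and the identities \eqref{eqnb=1} and \eqref{eqnb>1} for the final closed form. The only difference is that you spell out the cancellation in \eqref{eqnb>1} at $r=1$ and $r=d$ explicitly, which the paper leaves implicit.
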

\begin{proof}
The first inequality follows from Theorem \ref{thm:vallentin}. The second follows, since Theorem \ref{Thm_clas} implies that distortion$(\rho)^2$ $=$ $d^2\left\{\frac{1-w_1(\theta_1)}{1-w_d(\theta_1)}\right\}$ and $c_2(G)^2 \leq \text{distortion}(\rho)^2$. The last equality follows from (\ref{eqnb=1}) and (\ref{eqnb>1}) for $r=d$. 
\end{proof}

If Conjecture \ref{Conjecture2} holds true (for $r=d$) for graphs of classical parameters with $b\geq 1$ then the above inequality becomes equality. 
Now we can show Conjecture \ref{Conjecture2} for $\beta$ sufficiently large, that is
Theorem \ref{thm:beta_suff_large}.

\begin{proof}[Proof of Theorem \ref{thm:beta_suff_large}]
According to \cite[Theorem 4.5]{BCIM}, if $\beta$ is sufficiently large, then 
$\min_{1\leq j \leq d}\{w_d(\theta_j)\} = w_d(\theta_1)$.
We already know that $\max_{1\leq j \leq d}\{w_1(\theta_j)\} = w_1(\theta_1)$.
\end{proof}

\subsection{Known Examples}

In this subsection, we compute the least distortion constant 
of various graphs with classical parameters from Tables 6.1 and 6.2 in \cite{BCN}.
In each case, we provide a reference for the needed eigenvalues.
We only include explicit calculations for the arguably hardest case of
Hermitian forms graphs.
Our results suggest that the upper bound in Corollary \ref{Cor_class_b>1} is tight 
for all distance-regular regular graphs with classical parameters and $b \geq 1$.

We start with example with $b \geq 1$.
Note that we have Corollary \ref{Cor_class_b>1} in this case, which simplifies calculations.
The eigenvalues and therefore $c_2(G)$ only depend on the parameters $(d, b, \alpha, \beta)$.
If different graphs have the same parameters, then we refer to that entry.

\medskip
{\footnotesize \noindent
\setlength{\tabcolsep}{3pt}
\begin{tabular}{@{\,}l@{~~~}ccccc@{~~}l}
name & $d$ & $b$ & $\alpha$ & $\beta$ & $c_2(G)$ & ref \\
\hline
Hamming graph & $d$ & 1 & 0 & $q{-}1$ & $\sqrt{d}$ & \cite[Th. 2.5\Br{a}]{FV} \\
Johnson graph & $d$ & 1 & 1 & $n{-}d$ & $\sqrt{d}$ & \cite[Th. 2.5\Br{b}]{FV} \\
Halved cube & $d$ & 1 & 2 & $m$ & $\sqrt{d}$ & \cite[\S9.2D]{BCN} \\
Doob graph & $d$ & 1 & 0 & 3 & $\sqrt{d}$ & see Hamming, $q{=}4$\\
Grassmann graph & $d$ & $q$ & $q$ & $\gauss{n-d+1}{1}_q{-}1$ & $d \sqrt{q^{d-1} / \gauss{d}{1}_q}$ & \cite[\S5, Prop. 5.4\Br{iv}]{BCIM}\\
Twisted Grassmann 
& $d$ & $q$ & $q$ & $\gauss{n-d+1}{1}_q{-}1$ & $d \sqrt{q^{d-1} / \gauss{d}{1}_q}$ & see Grassmann\\
Bilinear forms graph & $d$ & $q$ & $q{-}1$ & $q^n{-}1$ & $d \sqrt{q^{d-1} / \gauss{d}{1}_q}$ & \cite[\S7, Prop. 7.3\Br{iii}]{BCIM}\\
\begin{tabular}{@{}c@{}}Dual polar graph \\ $e\in \{0,\frac{1}{2},1,\frac{3}{2},2\}$\end{tabular}
& $d$ & $q$ & 0 & $q^e$ & $d \sqrt{q^{d-1} / \gauss{d}{1}_q}$ & \cite[\S6]{BCIM} \\
Alternating forms graph & $d$ & $q^2$ & $q^2{-}1$ & $q^m{-}1$ & $d \sqrt{b^{d-1} / \gauss{d}{1}_b}$ & \cite[\S8, Prop. 8.3\Br{i}]{BCIM}\\
Quadratic forms graph & $d$ & $q^2$ & $q^2{-}1$ & $q^m{-}1$ & $d \sqrt{b^{d-1} / \gauss{d}{1}_b}$ & see alternating \\
\begin{tabular}{@{}l@{}}Half dual polar graph \\ $D_{n,n}(q)$ \end{tabular}
& $d$ & $q^2$ & $q^2+q$ & $\gauss{m+1}{1}_q{-}1$ & $d \sqrt{b^{d-1} / \gauss{d}{1}_b}$ & \cite[\S9.4C]{BCN} \\
\begin{tabular}{@{}l@{}}Dist. $1$-or-$2$ symplectic \\ dual polar graph \end{tabular}
& $d$ & $q^2$ & $q^2+q$ & $\gauss{m+1}{1}_q{-}1$ & $d \sqrt{b^{d-1} / \gauss{d}{1}_b}$ & see $D_{n,n}(q)$\\
Pseudo $D_m(q)$ graphs & $d$ & $q$ & 0 & 1 & $d \sqrt{b^{d-1} / \gauss{d}{1}_b}$ & dual polar, $e{=}0$ \\
Gosset graph $E_7(1)$ & 3 & 1 & 4 & 9 & $\sqrt{3}$ & \cite[\S3.11]{BCN}\\
\begin{tabular}{@{}l@{}}Exceptional Lie graph \\ $E_{7,7}(q)$ \end{tabular}
 & 3 & $q^4$ & $\gauss{5}{1}_q{-}1$ & $\gauss{10}{1}_q{-}1$ & $3 \sqrt{ b^2/\gauss{3}{1}_b}$ & \cite[\S10.7]{BCN}\\
Affine $E_6(q)$ graph & 3 & $q^4$ & $q^4{-}1$ & $q^9{-}1$ & $3 \sqrt{ b^2/\gauss{3}{1}_b}$ & \cite[\S10.8]{BCN} \\
\end{tabular}\par}

\medskip 
For $b \leq -1$, we have the following.

{\medskip \footnotesize \noindent
\setlength{\tabcolsep}{2.3pt}
\begin{tabular}{@{\,}l@{~~~}ccccc@{~~}l}
name & $d$ & $b$ & $\alpha$ & $\beta$ & $c_2(G)$ & ref \\
\hline
Witt graph $M_{24}$ & 3 & $-2$ & $-4$ & 10 & $\frac25 \sqrt{42}$ & \cite[\S11.4A]{BCN} \\
Witt graph $M_{23}$ & 3 & $-2$ & $-2$ & 5 & $2 \sqrt{\frac{21}{13}}$ & \cite[\S11.4B]{BCN} \\
\begin{tabular}{@{}l@{}}Extended ternary \\ Golay code graph\end{tabular}
& 3 & $-2$ & $-3$ & 8 & $\sqrt{\frac{33}{5}}$ & \cite[\S11.3A]{BCN} \\
Triality graph $^3D_{4,2}(q)$ & 3 & $-q$ & $\frac{q}{1{-}q}$ & $q^2{+}q$ & $3 \sqrt{\frac{q^5}{(q^3{+}1)(q^2{+}1)}}$ & \cite[\S10.7]{BCN} \\
\begin{tabular}{@{}l@{}}Unitary dual polar \\ graph $U(2d, \sqrt{q})$\end{tabular}
 & $d$ & $-\sqrt{q}$ & $\frac{q{+}\sqrt{q}}{1-\sqrt{q}}$ & $\frac{\sqrt{q}-(-\sqrt{q})^{d+1}}{1-\sqrt{q}}$ & $d \sqrt{q^{d-1} / \gauss{d}{1}_{q}}$ & 
 \begin{tabular}{@{}l@{}}dual polar \\ for $e=\frac12$\end{tabular}\\
\begin{tabular}{@{}c@{}}Hermitian forms graph \end{tabular}
& $d$ & $-q$ & $-q{-}1$ & $-(-q)^d{-}1$ & 
$d \sqrt{\frac{(b^{d}{+}b^{d{-}1}{+}b{+}1)b^{d{-}1}}{(b^d+b+1)\gauss{d}{1}_b}}$
& \S\ref{subsec:hermitian} \\
\end{tabular}\par}

\medskip

Note that $U(2d, \sqrt{q})$ is a dual polar graph with $e=\frac12$ which happens to have two sets of classical parameters, one with $b \geq 1$, see \cite[Corollary 6.2.2]{BCN}. 

\subsection{Hermitian forms graphs} \label{subsec:hermitian}

Hermitian forms graphs are graphs with classical parameters with $(d,b,\alpha,\beta)\allowbreak = (d,-q,-q{-}1,-(-q)^d{-}1)$ see \cite[\S9.5C]{BCN}. We know that
\begin{align}\label{HermEigenmat}
v_j(\theta_i) = (-1)^j \sum_{h=0}^{j} (-q)^{\binom{j-h}{2}+hd}\gauss{d-h}{d-j}_b \gauss{d-i}{h}_b
\end{align}
(see \cite[\S9]{BCIM}, \cite{Schmidt}, and \cite{Stanton}). Here the Gaussian coefficients have base $b = -q$. The eigenvalues are $\theta_i = ((-q)^{2d-i}-1)/(q+1)$ for $i=0,\ldots,d$. Note that, the second largest eigenvalue here is $\theta_2$ rather than $\theta_1$. Thus, we prove conjectures \ref{Conjecture1} and \ref{Conjecture2} with respect to $\theta_2$. Since Hermitian forms graphs are self-dual, thus $w_r(\theta_i) = w_i(\theta_r)$ for all $i = 0,\ldots,d$ and $r=0,\ldots,d$. We prove Conjecture \ref{Conjecture1} and Conjecture \ref{Conjecture2} below.

\begin{proposition}
Let $G$ be a Hermitian forms graph. Then
\begin{align*}
    \min_{1\leq r\leq d}\left\{\frac{1-w_r(\theta_2)}{r^2}\right\}
     = \frac{1-w_d(\theta_2)}{d^2}. 
\end{align*}
\end{proposition}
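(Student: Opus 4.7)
The plan is as follows. By the self-duality of the Hermitian forms graph noted just before the proposition, $w_r(\theta_2)=w_2(\theta_r)$, so it suffices to show that $(1-w_2(\theta_r))/r^2$ is minimised at $r=d$ on $\{1,\ldots,d\}$. The three-term recurrence for the cosine sequence, combined with $w_0=1$, $w_1=\theta/k$, and $c_1=1$, gives
\[
w_2(\theta)=\frac{\theta^2-a_1\theta-k}{kb_1}.
\]
The numerator of $1-w_2(\theta)$ is a downward-opening quadratic in $\theta$ that vanishes at $\theta_0=k$ (since $k=a_1+b_1+1$) and has product of roots $-k(b_1+1)$, hence its second root is $-(b_1+1)$. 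Therefore
\[
1-w_2(\theta)=\frac{(k-\theta)(\theta+b_1+1)}{kb_1}.
\]

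Next I would substitute the Hermitian-forms parameters $\alpha=-q-1$ and $\beta=-(-q)^d-1$, which yield $k=(q^{2d}-1)/(q+1)$, $b_1=q^2(q^{2d-2}-1)/(q+1)$, and $\theta_r=((-q)^{2d-r}-1)/(q+1)$. Because $2d$ is even, $(-q)^{2d-r}=(-1)^r q^{2d-r}$, and a short calculation gives
\[
k-\theta_r=\frac{q^{2d}-(-1)^r q^{2d-r}}{q+1},\qquad \theta_r+b_1+1=\frac{q^{2d}+(-1)^r q^{2d-r}-q(q-1)}{q+1}.
\]
Expanding the product and pulling out the common factor $A_r:=q^r-(-1)^r$, I expect to arrive at the compact form
\[
(k-\theta_r)(\theta_r+b_1+1)=\frac{q^{2d-r+1}\,A_r\,C_r}{(q+1)^2},\qquad C_r:=q^{2d-1}+(-1)^r q^{2d-r-1}-q+1.
\]

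After clearing the positive common denominator $kb_1$, the proposition reduces to the polynomial inequality
\[
d^2\,q^{d-r}A_rC_r\ \geq\ r^2\,A_dC_d\qquad (1\leq r<d),
\]
to be verified for all prime powers $q\geq 2$. For large $q$ both sides are dominated by a term of order $q^{3d-1}$, and the inequality reduces asymptotically to $d^2\geq r^2$. The technical core, which I regard as the main obstacle, is to control the parity-dependent lower-order corrections for small $q$. The approach I would take is a case analysis on the four parity combinations of $r$ and $d$; in each case, after expanding the difference $d^2q^{d-r}A_rC_r-r^2A_dC_d$ as a polynomial in $q$, one can group terms so that the positive leading contribution absorbs the remaining signed terms. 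An alternative which I expect to be cleaner is to prove the stronger stepwise monotonicity $(1-w_2(\theta_r))/r^2\geq (1-w_2(\theta_{r+1}))/(r+1)^2$ for $1\leq r<d$ and telescope; this involves only adjacent indices, fixing the parity bookkeeping at each step.
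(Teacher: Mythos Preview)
Your approach is essentially the same as the paper's. Both proofs invoke self-duality to rewrite $w_r(\theta_2)=w_2(\theta_r)$ and then compute $1-w_2(\theta_r)$ explicitly; the paper applies the eigenmatrix formula \eqref{HermEigenmat} directly, whereas you obtain the equivalent expression from the three-term recurrence together with the pleasant factorisation $1-w_2(\theta)=(k-\theta)(\theta+b_1+1)/(kb_1)$. Your product $q^{2d-r+1}A_rC_r/(q+1)^2$ is exactly $q^{2d+1}$ times the paper's numerator $q^{2d-1}(1-q^{-2r})+(q-1)\bigl((-1)^r q^{-r}-1\bigr)$, so the two reductions coincide. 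The paper then simply writes ``the assertion follows'' for the remaining monotonicity check, which is the same routine verification you describe via parity case analysis or the stepwise comparison $(1-w_2(\theta_r))/r^2\geq (1-w_2(\theta_{r+1}))/(r+1)^2$; neither proof spells this out in detail.
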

\begin{proof}
We know $\displaystyle{w_r(\theta_2) = w_2(\theta_r) = \frac{v_2(\theta_r)}{v_2(\theta_0)}}$. By using \eqref{HermEigenmat}, we get that
\begin{align*}
    \frac{1-w_r(\theta_2)}{r^2} = \frac{q^{2d-1}(1-q^{-2r})+(q-1)((-1)^r q^{-r}-1)}{(q^{2d-1}-q)(1-q^{-2d})r^2}.
\end{align*}
The assertion follows.
\end{proof}

\begin{proposition}
Let $G$ be a Hermitian forms graph. Then
\begin{align*}
    \max_{1\leq j\leq d} \left\{\frac{1-w_d(\theta_j)}{1-w_1(\theta_j)}\right\} = \frac{1-w_d(\theta_2)}{1-w_1(\theta_2)}.
\end{align*}
\end{proposition}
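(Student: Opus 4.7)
The plan is to exploit the self-duality of the Hermitian forms graph, which gives $w_r(\theta_j) = w_j(\theta_r)$ (stated explicitly in the preceding paragraph of the paper), to rewrite the target expression as
$$\frac{1 - w_d(\theta_j)}{1 - w_1(\theta_j)} = \frac{1 - w_j(\theta_d)}{1 - w_j(\theta_1)}.$$
Both numerator and denominator are now evaluations at the fixed eigenvalues $\theta_d$ and $\theta_1$, viewed as functions of $j$, which reduces the claim to a one-variable maximization in $j$.

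The denominator simplifies immediately: by self-duality and the identity $w_1(\lambda) = \lambda/k$ (which follows from the three-term recurrence of Section \ref{sec:opt_embdedding} together with $w_0 = 1$), we have $w_j(\theta_1) = w_1(\theta_j) = \theta_j/k$, so
$$1 - w_j(\theta_1) = \frac{q^{2d} - (-q)^{2d-j}}{q^{2d}-1}.$$
For the numerator, I would apply \eqref{HermEigenmat} at $i = d$; since $\gauss{d-d}{h}_{-q} = \gauss{0}{h}_{-q}$ vanishes for $h \geq 1$, only the $h=0$ term in the sum survives and
$$v_j(\theta_d) = (-1)^j (-q)^{\binom{j}{2}} \gauss{d}{j}_{-q}.$$
Combining this with the product formula $v_j(\theta_0) = k_j = \prod_{i=0}^{j-1} b_i/c_{i+1}$ read from the intersection numbers of a graph with classical parameters yields a closed form for $1 - w_j(\theta_d)$.

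Assembling everything, the ratio $\frac{1 - w_d(\theta_j)}{1 - w_1(\theta_j)}$ becomes an explicit rational function of $q$, $d$, and $j$. To finish, I would verify that it attains its maximum at $j = 2$ by comparing consecutive values $j$ and $j+1$ and reducing to elementary inequalities among Gaussian binomials, with the edge cases $j \in \{1, d-1, d\}$ checked separately. The main obstacle is that $(-q)^{2d-j}$ in the denominator and $(-q)^{\binom{j}{2}}$ in the numerator oscillate in sign with $j$, forcing a case analysis on the residue of $j$ modulo $4$; making the dominance of $j=2$ precise across this parity split should be the crux of the argument.
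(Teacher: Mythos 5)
Your setup is sound and in fact coincides with the paper's starting point: self-duality, the evaluation of \eqref{HermEigenmat} at $i=d$ (only the $h=0$ term survives), and the denominator $1-w_1(\theta_j)=\frac{q^{2d}-(-q)^{2d-j}}{q^{2d}-1}$ are all correct. But the proof stops exactly where the real content begins. You never establish that the ratio is maximized at $j=2$; you defer it to ``comparing consecutive values $j$ and $j+1$'' with a case analysis on $j$ modulo $4$, and you yourself call this ``the crux of the argument.'' As written this is a plan, not a proof, and it is not clear it would go through in that form: the full ratio need not vary monotonically in $j$, and the oscillating signs you flag make consecutive comparisons of the assembled rational function genuinely delicate rather than ``elementary.''

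What makes the statement tractable in the paper is a decoupling that your plan misses. First, dividing $v_j(\theta_d)$ by $k_j$ does not leave a messy rational function: it collapses to the closed form $w_d(\theta_j)=\bigl(\prod_{h=0}^{j-1}((-q)^{d-h}+1)\bigr)^{-1}$, from which $\frac{|w_d(\theta_j)|}{|w_d(\theta_{j+1})|}=|(-q)^{d-j}+1|\geq 1$, so the magnitudes decrease in $j$ and the sign pattern is transparent. Second, since $\theta_2$ is the second-largest eigenvalue, $\max_{1\leq j\leq d} w_1(\theta_j)=w_1(\theta_2)$, i.e.\ the denominator $1-w_1(\theta_j)$ is minimized at $j=2$. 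Hence for $d$ even, where the product formula gives $\min_{1\leq j\leq d} w_d(\theta_j)=w_d(\theta_2)$, numerator and denominator are simultaneously extremal at $j=2$ and no comparison of the full ratio is needed at all. For $d$ odd one has $w_d(\theta_1)<w_d(\theta_2)<0$, so a single comparison of the $j=1$ and $j=2$ ratios remains, which the paper settles with the explicit values $\frac{q^d+1}{q^{d-1}(q+1)}$ versus $\frac{(q^d+1)(q^d+q-1)}{q^{d-1}(q+1)(q^d+q-1-q^{d-1})}$. To turn your proposal into a proof you would need either to recover this decoupling (the product form of $w_d(\theta_j)$ plus the known extremality of $w_1(\theta_2)$) or to actually carry out the mod-$4$ consecutive-ratio analysis, including the odd-$d$ subtlety at $j=1$ that your edge-case list gestures at but does not resolve.
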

\begin{proof}
We know $\displaystyle{w_d(\theta_j) = w_j(\theta_d) = \frac{v_j(\theta_d)}{v_j(\theta_0)} = \frac{v_j(\theta_d)}{k_j}}$. By using (\ref{HermEigenmat}) to compute $v_j(\theta_d)$ and \cite[Corollary 8.4.4]{BCN} to compute $k_j$, we get that 
\begin{align*}
    w_d(\theta_j) = \left(\prod_{h=0}^{j-1}((-q)^{d-h}+1)\right)^{-1}.
\end{align*}
Recall that
\begin{align*}
    w_1(\theta_j) = \frac{\theta_j}{\theta_0} = \frac{(-q)^{2d-j}-1}{(-q)^{2d}-1}
\end{align*}
and that $\displaystyle{\max_{1\leq j \leq d} \{w_1(\theta_j)\} = w_1(\theta_2)}$. Note that we have 
\begin{align*}
    \frac{|w_d(\theta_j)|}{|w_d(\theta_{j+1})|} = |(-q)^{d-j}+1| \geq 1
\end{align*}
as well as
\begin{align*}
    w_d(\theta_1) &= \frac{1}{(-q)^d+1}, \text{ and }\\
    w_d(\theta_2) &= \frac{1}{((-q)^{d}+1)((-q)^{d-1}+1)}.
\end{align*}
Thus, if $d$ is even then $\displaystyle{\min_{1\leq j \leq d}\{w_d(\theta_j)\} = w_d(\theta_2)}$. Hence,
\begin{align*}
    \max_{1\leq j\leq d} \left\{\frac{1-w_d(\theta_j)}{1-w_1(\theta_j)}\right\} = \frac{1-w_d(\theta_2)}{1-w_1(\theta_2)}.
\end{align*}
On the other hand, if $d$ is odd then 
\begin{align*}
\displaystyle{\min_{2\leq j \leq d}\{w_d(\theta_j)\} = w_d(\theta_2)},
\end{align*}
but $\min\{w_d(\theta_1),w_d(\theta_2)\} = w_d(\theta_1)$.  
Hence,
\begin{align*}
    \max_{1\leq j\leq d} \left\{\frac{1-w_d(\theta_j)}{1-w_1(\theta_j)}\right\} &= \max\left\{\frac{1-w_d(\theta_1)}{1-w_1(\theta_1)},\frac{1-w_d(\theta_2)}{1-w_1(\theta_2)}\right\}\\
    &= \max\left\{\frac{q^d+1}{q^{d-1}(q+1)},\frac{(q^d+1)(q^d+q-1)}{q^{d-1}(q+1)(q^d+q-1-q^{d-1})}\right\}\\
    &= \frac{1-w_d(\theta_2)}{1-w_1(\theta_2)}. \qedhere
\end{align*}
\end{proof}
Thus, by using the above two propositions we have
\begin{align*}
    c_2(G) &= d\sqrt{\frac{1-w_1(\theta_2)}{1-w_d(\theta_2)}} = d \sqrt{\frac{(b^{d}+b+1+b^{d-1})b^{d-1}}{(b^d+b+1)\gauss{d}{1}_b}}.
\end{align*}

\section{Odd Graphs}

For natural numbers $n\geq 2d$, the Johnson graph $J(n,d)$ has as its vertices the $d$-subsets of a given set with $n$ elements, where two $d$-subsets are adjacent if and only they meet in a $(d-1)$-set. For $0\leq j\leq d$, the distance-$j$ graph $J(n,d,j)$ of the Johnson graph $J(n,d)$ has the same vertex set as $J(n,d)$ and two $d$-subsets are adjacent in $J(n,d,j)$ if and only if they are at distance $j$ in $J(n,d)$ which is equivalent to their intersection having size $d-j$. The eigenvalues of the Johnson graph $J(n,d,j)$ are given by the Eberlein polynomials $E_j(i)$ for $0\leq i,j\leq d$, where
\begin{equation*}
    E_j(i)=\sum_{h=0}^{j}(-1)^h\binom{i}{h}\binom{d-i}{j-h}\binom{n-d-i}{j-h}.
\end{equation*}
See also \cite[\S3]{BCIM} for other formulas of these eigenvalues.

The Odd graph $O_{d+1}$ is the distance $d$ graph of the Johnson graph $J(2d+1,d,1)$. Its vertices are the $d$-subsets of $[2d+1]$ and two vertices  are adjacent if and only if they are disjoint. If $x$ and $y$ are two vertices at distance $r$ in $O_{d+1}$, then
\begin{align*}
    |x\cap y| = \begin{cases}
    d-\frac{r}{2}, &\text{if } r \text{ is even},\\
    d-\frac{2d-r+1}{2}, &\text{if } r \text{ is odd}. 
    \end{cases}
\end{align*}
We use the Eberlein polynomials above to compute cosine sequences of the Odd graph. The eigenvalues of $O_{d+1}$ are $\theta_i = (-1)^i(d+1-i)$, for $0\leq i\leq d$. Note that the second largest eigenvalue here is $\theta_2$ rather than $\theta_1$. Thus, we prove Conjecture \ref{Conjecture1} and Conjecture \ref{Conjecture2} with respect to $\theta_2$.

\begin{proposition}
For the Odd graph $O_{d+1}$, 
\begin{align*}
    \min_{1\leq r\leq d}\left\{\frac{1-w_r(\theta_2)}{r^2}\right\} = \frac{1-w_d(\theta_2)}{d^2}.
\end{align*}
\end{proposition}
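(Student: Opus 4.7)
The plan is to exploit the fact that $O_{d+1}$ is the distance-$d$ graph of $J(2d+1,d)$, so the two graphs share the same eigenspaces and the second eigenspace of the Johnson scheme carries the Odd graph eigenvalue $\theta_2 = d-1$. Under this identification the $r$-th cosine of $O_{d+1}$ with respect to $\theta_2$ equals the $h_r$-th cosine of the index-$2$ eigenspace of $J(2d+1,d)$, namely
\[
    w_r(\theta_2) = \frac{E_{h_r}(2)}{E_{h_r}(0)} = \frac{E_{h_r}(2)}{\binom{d}{h_r}\binom{d+1}{h_r}},
\]
where $h_r$ is the Johnson distance associated with Odd-distance $r$ as in the text preceding the proposition. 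The proof therefore reduces to a direct study of $\tilde w_h(2) := E_h(2)/E_h(0)$ at $h = h_1,\ldots,h_d$.

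The first step is to simplify $1-\tilde w_h(2)$ to a product form. Writing the three ratios $\binom{d-2}{h-s}\binom{d-1}{h-s}/\bigl(\binom{d}{h}\binom{d+1}{h}\bigr)$ ($s=0,1,2$) in terms of falling factorials and using the identity $(d-h)^4 - 2h^2(d-h)^2 + h^4 = d^2(d-2h)^2$, the numerator collapses and one obtains the compact form
\[
    1 - \tilde w_h(2) = \frac{2h\bigl[\,2d(d-h)+h-1\bigr]}{d(d-1)(d+1)}.
\]
Substituting $h=h_r$ in each parity then yields, after a brief rearrangement,
\[
    \frac{1-w_r(\theta_2)}{r^2} = \frac{2(2d^2-1) - r(2d-1)}{2rd(d-1)(d+1)} \quad (r \text{ even}),
\]
\[
    \frac{1-w_r(\theta_2)}{r^2} = \frac{4d^2 r - r^2(2d-1) - (2d+1)}{2r^2 d(d-1)(d+1)} \quad (r \text{ odd}).
\]

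Writing $\Phi(r) := (1-w_r(\theta_2))/r^2$, the next step is to show $\Phi$ is strictly decreasing within each parity class. For even $r$ this is immediate from the first display, since the expression has the form $A/r + B$ with $A>0$. For odd $r$ the derivative of the second expression with respect to $r$ is negative as soon as $r \geq (2d+1)/(2d^2)$, a threshold strictly below $1$ for $d\geq 2$, so the same conclusion holds.

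The only cross-parity comparison needed is $\Phi(d) \leq \Phi(d-1)$. Clearing denominators this reduces to the polynomial inequality $4d^3 - 4d^2 - 5d + 2 \geq 0$ when $d$ is even and $4d^3 - d - 1 \geq 0$ when $d$ is odd, both evident for $d \geq 2$. Combined with the monotonicity within each parity class, this yields $\Phi(r) \geq \Phi(d)$ for every $r \in \{1,\ldots,d\}$, as required. The only mildly non-routine ingredient is the perfect-square collapse producing the compact form of $1-\tilde w_h(2)$; everything downstream is mechanical case-checking.
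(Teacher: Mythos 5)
Your proposal is correct and follows essentially the same route as the paper: you derive the identical closed forms for $(1-w_r(\theta_2))/r^2$ in the even and odd cases via the Eberlein polynomials evaluated at $i=2$, matching the paper's formulas exactly. The only difference is that you spell out the concluding step (monotonicity within each parity class plus the single comparison $\Phi(d-1)\geq\Phi(d)$), which the paper compresses into ``from these formulas the claim follows,'' and your details there check out.
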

\begin{proof} 
For $j\in\{1,\ldots,d\}$, we have
\begin{align*}
    1-\frac{E_j(2)}{E_j(0)} = \frac{4jd^2-4j^2d+2j(j-1)}{d(d^2-1)}.
\end{align*}
Thus, for $j_1 = r/2$ and $j_2 = (2d-r+1)/2$, 
\begin{align*}
 \frac{1-w_r(\theta_2)}{r^2} &= \frac{1-\frac{E_{j_1}(2)}{E_{j_1}(0)}}{r^2} = \frac{\frac{2(2d^2-1)}{r}-(2d-1)}{2d(d^2-1)}
\intertext{for $r$ even, and}
\frac{1-w_r(\theta_2)}{r^2} &= \frac{1-\frac{E_{j_2}(2)}{E_{j_2}(0)}}{r^2} = \frac{\frac{4d^2}{r}-\frac{2d+1}{r^2}-(2d-1)}{2d(d^2-1)}
\end{align*}
for $r$ odd.
 From these formulas the claim follows.
\end{proof}

The distance-$d$ graph of $O_{d+1}$ is the distance-$\lceil \frac{d}{2} \rceil$ graph of Johnson graph $J(n,d)$, i.e., it is $J(2d+1,d,\lceil \frac{d}{2} \rceil)$, whose eigenvalues are
\begin{align*}
    E_{\lceil \frac{d}{2}\rceil} (i) = \sum_{h=0}^{i}(-1)^{i-h}\binom{i}{h}\binom{d-h}{\lceil \frac{d}{2} \rceil}\binom{d-i+h+1}{\lfloor \frac{d}{2} \rfloor+1}.
\end{align*}

\begin{lemma}
The smallest eigenvalue of $J(2d+1,2d,\lceil \frac{d}{2} \rceil )$ is $E_{\lceil \frac{d}{2} \rceil}(2)$.
\end{lemma}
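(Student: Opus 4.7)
My plan is to establish $E_j(i) \geq E_j(2)$ for every $i \in \{0, 1, 3, 4, \ldots, d\}$, where $j = \lceil d/2 \rceil$ and the Eberlein polynomial is evaluated at $n = 2d+1$. (The intended graph is $J(2d+1, d, \lceil d/2 \rceil)$, matching the identification in the preceding paragraph; I read the ``$2d$'' in the statement as a typo for ``$d$''.) Small-case computations show the minimum is strict at $i = 2$ for even $d$ but tied with $E_j(1)$ for odd $d$, so a parity split seems unavoidable.

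The first step is to obtain closed forms at the ``easy'' indices. The sum defining $E_j(2)$ has only three nonzero terms, and after the standard manipulation $\binom{a}{k-1}/\binom{a}{k} = k/(a-k+1)$, it simplifies to $-C(d)\binom{d-2}{j-2}\binom{d-1}{j-1}$ for an explicit positive rational $C(d)$; in particular $E_j(2) < 0$ for $d \geq 3$. The values $E_j(0) = \binom{d}{j}\binom{d+1}{j}$, $E_j(d) = (-1)^j \binom{d}{j}$ (only the $h = j$ term survives), and $E_j(1)$ (a two-term alternating sum) are then compared with $E_j(2)$ via elementary binomial identities, disposing of $i \in \{0, 1, d\}$. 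The observed identity $E_j(2k-1) = E_j(2k)$ for odd $d$ I would prove by a shift of summation index in the Eberlein sum; this simultaneously handles the odd-$d$ case and halves the interior range to check.

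For $3 \leq i \leq d-1$, the plan is to invoke the three-term recurrence that $E_j(i)$ satisfies as a polynomial in $i$, inherited from the distance polynomials $v_j$ of the Johnson scheme $J(2d+1, d)$. Setting $F(i) = E_j(i) - E_j(2)$, the recurrence becomes a linear relation among $F(i-1), F(i), F(i+1)$ whose coefficients can be written down explicitly in $i$ and $d$; an induction on $i$ starting from the closed-form-verifiable bases $i = 3, 4$ then propagates $F(i) \geq 0$ across the interior.

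The principal obstacle is the bookkeeping in the inductive step: the recurrence coefficients depend on both $i$ and $d$ nonuniformly and change sign at certain thresholds, forcing the induction to split into sub-cases, and in the even-$d$ case one must separately rule out any secondary local minimum of $E_j$ on $\{0, 1, \ldots, d\}$ undercutting $E_j(2)$. An alternative route is to reinterpret $E_j(i)$ as a dual Hahn polynomial in $i$ and appeal to classical oscillation and interlacing results, but localizing the global minimum specifically at $i = 2$ still requires the specific arithmetic of $n = 2d + 1$ and $j = \lceil d/2 \rceil$, so I do not expect it to be materially shorter.
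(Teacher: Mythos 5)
Your outline identifies the right target (reading the statement's ``$2d$'' as a typo for ``$d$'' is correct, and the closed forms at $i=0,1,2,d$ together with the pairing $E_j(2k-1)=E_j(2k)$ for odd $d$ are all genuine facts), but the proof has a gap exactly where the difficulty lives: the interior indices $3\le i\le d-1$. You propose to propagate $F(i)=E_j(i)-E_j(2)\ge 0$ by induction along a three-term difference relation in $i$, but you do not exhibit the relation, and note yourself that its coefficients ``change sign at certain thresholds.'' That is not a bookkeeping nuisance; it is the obstruction. A relation of the form $\alpha(i)F(i+1)=\beta(i)F(i)+\gamma(i)F(i-1)+\delta(i)$ (the inhomogeneous term $\delta$ is unavoidable since the constant $E_j(2)$ is not a solution of the recurrence) only propagates nonnegativity when the signs of $\alpha,\beta,\gamma,\delta$ cooperate, and for these dual Hahn polynomials they do not do so uniformly in $i$ --- indeed $E_j(i)$ oscillates in sign across the interior (e.g.\ for $d=5$, $j=3$ the spectrum is $200,-20,-20,8,8,-10$), so $F$ is far from monotone and a two-step-base induction cannot be expected to close without substantial additional case analysis that you have not supplied. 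You also flag, but do not resolve, the need to exclude a secondary local minimum undercutting $E_j(2)$ in the even case. As written, the argument establishes the boundary comparisons and defers the entire interior to a plan.

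For contrast, the paper avoids pointwise control of the interior eigenvalues altogether. For odd $d$ it quotes \cite[Proposition 3.4 and Theorem 3.10]{BCIM} to get $E_j(2)=E_j(1)=\min_i E_j(i)$ directly. For even $d=2\ell$ it uses the orthogonality relation $\sum_i m_i E_\ell(i)^2 = vE_\ell(0)$ to compute $\sum_{i\ge 3} m_i E_\ell(i)^2$ exactly, then bounds $|E_\ell(i)|^2$ for every $i\ge 3$ by that sum divided by $m_3$ (the multiplicities are increasing), and shows the resulting quantity is strictly less than $E_\ell(2)^2$ for $\ell\ge 7$, with $\ell\le 6$ checked by computer; since $E_\ell(1)>0>E_\ell(2)$ this forces the minimum at $i=2$. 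If you want to salvage your route, you would either need to carry out the sign analysis of the difference equation in full, or replace the interior induction by a uniform bound of this second-moment type.
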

\begin{proof}
Suppose $d=2\ell+1$, then, by \cite[Proposition 3.4]{BCIM}, $E_{\lceil\frac{d}{2}\rceil}(2) = E_{\lceil\frac{d}{2}\rceil}(1)$. The latter is the smallest eigenvalue according to \cite[Theorem 3.10]{BCIM}. Now, suppose $d=2\ell$. For $\ell\leq 6$, we verified the statement by computer. Thus, we assume that $\ell\geq 7$. The multiplicity of each eigenvalue $E_\ell(i)$ is $m_i = \binom{2d+1}{i}-\binom{2d+1}{i-1}$. Hence,
\begin{align*}
    \sum_{i=3}^d m_i E_\ell(i)^2 &= v\cdot E_\ell(0) - \sum_{i=0}^2m_iE_\ell(i)^2\\
    &= \frac{(4\ell+1)!}{\ell!^4(\ell+1)}-\frac{(12\ell^3+7\ell^2-3\ell-1){(2\ell)!}^4}{(2\ell-1)(\ell+1)^2 \ell!^8}.
\end{align*}
For all $i\in \{3,\ldots,d\}$, $m_i \geq m_3 = \frac{8\ell(4\ell+1)(\ell-1)}{3}$, thus we have
\begin{align*}
    |E_\ell(i)| \leq \sqrt{\frac{3}{8\ell(4\ell+1)(\ell-1)}\left(\frac{(4\ell+1)!}{\ell!^4(\ell+1)}-\frac{(12\ell^3+7\ell^2-3\ell-1)(2\ell)!^4}{(2\ell-1)(\ell+1)^2\ell!^8}\right)}.
\end{align*}
Hence, 
\begin{align*}
    \frac{|E_\ell(i)|}{|E_\ell(2)|} \leq \sqrt{\frac{3}{8\ell^3(\ell{-}1)}\left(\frac{(4\ell)!\ell!^4}{(2\ell)!^4}(\ell{+}1)(2\ell{-}1)^2-\frac{(12\ell^3{+}7\ell^2{-}3\ell{-}1)(2\ell{-}1)}{(4\ell{+}1)}\right)}.
\end{align*}
By induction on $\ell\geq 7$, we verify that $\frac{(4\ell)!\ell!^4}{(2\ell)!^4} \leq \frac{\ell}{2}$.
This implies that
\begin{align*}
     \frac{|E_\ell(i)|}{|E_\ell(2)|} \leq \sqrt{\frac{48\ell^5-60\ell^4-42\ell^3+51\ell^2-3}{64\ell^5-48\ell^4-16\ell^3}} < 1.
\end{align*}
Since $E_\ell(1)>0$ and $E_\ell(2)<0$, 
$\min_{1\leq i\leq d}\{E_\ell(i)\} = E_\ell(2)$. 
\end{proof}

Hence, we obtain the following corollary.
\begin{corollary}
For the Odd graph $O_{d+1}$, 
\begin{align*}
   \max_{1\leq j \leq d}\left\{\frac{1-w_d(\theta_j)}{1-w_1(\theta_j)}\right\}  = \frac{1-w_d(\theta_2)}{1-w_1(\theta_2)}.
\end{align*} 
\end{corollary}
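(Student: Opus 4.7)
The plan is to deduce the corollary from the preceding lemma together with a direct calculation of $w_1(\theta_j)$, following the same template used for the Hermitian forms graphs in Section \ref{subsec:hermitian}. The idea is to show independently that the numerator $1 - w_d(\theta_j)$ is maximized at $j = 2$ and that the denominator $1 - w_1(\theta_j)$ is (positive and) minimized at $j = 2$; since both extremize in the same direction at the same index, the ratio is maximized at $j=2$.

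First I would translate the preceding lemma into a statement about $w_d$. Since the distance-$d$ graph of $O_{d+1}$ is $J(2d+1, d, \lceil d/2 \rceil)$, we have
\[
    w_d(\theta_j) = \frac{E_{\lceil d/2 \rceil}(j)}{E_{\lceil d/2 \rceil}(0)},
\]
with the valency $E_{\lceil d/2 \rceil}(0)$ positive. The preceding lemma says that $E_{\lceil d/2 \rceil}(j)$, viewed as a function of $j \in \{1,\ldots,d\}$, attains its minimum at $j = 2$, so the same is true of $w_d(\theta_j)$, and hence $1 - w_d(\theta_j)$ is maximized at $j = 2$.

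Next I would handle $w_1(\theta_j) = \theta_j/\theta_0 = (-1)^j(d+1-j)/(d+1)$, using the known eigenvalues $\theta_j = (-1)^j(d+1-j)$ of $O_{d+1}$. For odd $j$ these values are non-positive, while for even $j$ they equal $(d+1-j)/(d+1)$, which is strictly decreasing in $j$. Hence the unique maximum of $w_1(\theta_j)$ over $j \in \{1,\ldots,d\}$ is $w_1(\theta_2) = (d-1)/(d+1) < 1$, and $w_1(\theta_j) < 1$ for every such $j$. Consequently $1 - w_1(\theta_j) > 0$ for all $j \in \{1,\ldots,d\}$ and is minimized at $j = 2$.

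Combining the two steps, the ratio $(1 - w_d(\theta_j))/(1 - w_1(\theta_j))$ has its largest numerator and smallest positive denominator at the common index $j = 2$, so the maximum over $j \in \{1,\ldots,d\}$ is attained there, as claimed. The real work has already been done in the preceding lemma (the spectral bookkeeping on $J(2d+1, d, \lceil d/2 \rceil)$); once that is available, the monotonicity of $w_1(\theta_j)$ is elementary, so the corollary follows in a few lines with no further obstacle.
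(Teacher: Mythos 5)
Your argument is correct and coincides with the paper's (largely implicit) reasoning: the preceding lemma gives that $w_d(\theta_j)$ is minimized at $j=2$, while the elementary computation $w_1(\theta_j)=(-1)^j(d+1-j)/(d+1)$ shows the denominator $1-w_1(\theta_j)$ is positive and minimized at $j=2$, so the ratio is maximized there (using, as is standard, that cosine values lie in $[-1,1]$ so the numerators are nonnegative). No substantive difference from the paper's approach.
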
 

Thus,
\begin{align*}
   c_2(O_{d+1}) = d \sqrt{\frac{1-w_1(\theta_2)}{1-w_d(\theta_2)}} = \begin{cases}
   2\ell\sqrt{\frac{4\ell-2}{4\ell^2+\ell-1}}, &\text{if }d=2\ell,\\
   (2\ell+1)\sqrt{\frac{4\ell+2}{4\ell^2+7\ell+3}}, &\text{if }d=2\ell+1.
   \end{cases}
\end{align*} 

\section{Final Remarks}

In this paper, we disproved a conjecture of Vallentin on the least Euclidean distortion of distance-regular graphs. We proposed a revised conjecture related to the least Euclidean distortion and two related conjectures involving the cosine sequences of distance-regular graphs. We proved our conjectures for several families of graphs and presented computational arguments in their favor. Instead of considering the least distortion with respect to Euclidean norm, one may ask what happens for other norms. The least distortion $c_p(G)$ with respect to the $p$-norm for a graph $G$ is a natural generalization of $c_2(G)$ and was investigated by Jolissaint and Valette in \cite{Jolissaint2014}. In particular, they determine $c_p(G)$ for the hypercube. It would be interesting to investigate $c_p(G)$ for other distance-regular graphs as well.

\paragraph*{Acknowledgment} We thank Chris Godsil, Krystal Guo, Jack Koolen and Bill Martin for valuable comments.


\end{document}